\documentclass{amsart}
\usepackage{epsfig}
\usepackage{amsmath}
\usepackage{amssymb}
\usepackage{amscd}
\usepackage{graphicx}
\usepackage{color}
\usepackage{float}
\usepackage{verbatim}
\usepackage{bbm}
\usepackage{enumerate}
\usepackage{hyperref}
\numberwithin{equation}{section}

\newtheorem{thm}{Theorem}[section]
\newtheorem{lem}[thm]{Lemma}

\newtheorem{fact}[thm]{Fact}

\newtheorem{cor}[thm]{Corollary}
\newtheorem{defn}[thm]{Definition}

\theoremstyle{remark}
\newtheorem{rem}[thm]{Remark}

\def \N {\mathbb N}
\def \T {\mathcal T}
\def \TT {\mathsf T}
\def \TTT {\boldsymbol{\mathsf T}}
\def \bT {\boldsymbol\T}
\def \O {\mathcal O}
\def \Z {\mathbb Z}

\def \Q {\mathcal Q}

\def \P {\mathcal P}
\def \S {\mathcal S}

\def \B {\mathcal B}
\def \I {\mathcal I}

\def \eps {\varepsilon}

\def \sq {sequence}

\def \tl {topological}
\def \im {invariant measure}
\def \inv {invariant}

\def \htop {h_{\mathsf{top}}}

\begin{document}
	\title{Asymptotic pairs in topological actions of amenable groups}
	
	\author[T.\ Downarowicz and M.\ Wi\c{e}cek]{Tomasz Downarowicz and Mateusz Wi\c{e}cek}
	
	\address{\vskip 2pt \hskip -12pt Tomasz Downarowicz}
	
	\address{\hskip -12pt Faculty of Pure and Applied Mathematics, Wroc\l aw University of Science and Technology, Wroc\l aw, Poland}
	
	\email{downar@pwr.edu.pl}
	
	\address{\vskip 2pt \hskip -12pt Mateusz Wi\c{e}cek}
	
	\address{\hskip -12pt Faculty of Pure and Applied Mathematics, Wroc\l aw University of Technology, Wroc\l aw, Poland}
	
	\email{mateusz.wiecek@pwr.edu.pl}
	
	\subjclass[2020]{Primary 37B40, 37A35; Secondary 43A07}
	\keywords{Asymptotic pair, countable amenable group, multiorder.}
	\begin{abstract}
		We provide a definition of a $\prec$-asymptotic (we suggest the pronunciation ``prec-asymptotic'') pair in a topological action of a countable group $G$, where $\prec$ is an order on $G$ of type $\Z$. We then prove that if $G$ is a countable amenable group and $(X,G)$ is a \tl\ $G$-action of positive entropy, then for every multiorder
		$(\tilde{\O},\nu,G)$ and $\nu$-almost every order $\prec\,\in\tilde{\O}$ there exists a $\prec$-asympotic pair in~$X$. This result is a generalization of the Blanchard-Host-Ruette Theorem for classical topological dynamical systems (actions of~$\Z$). We also prove that for every countable amenable group $G$, and every multiorder on $G$ arising from a tiling system, every topological $G$-action of entropy zero has an extension which has no $\prec$-asymptotic pairs for any $\prec$ belonging to this multiorder. Together, these two theorems give a characterization of topological $G$-actions of entropy zero: $(X,G)$ has topological entropy zero if and only if, for any multiorder $\tilde\O_{\TTT}$ on $G$ arising from a tiling system of entropy zero, there exists an extension $(Y,G)$ of $(X,G)$, which has no $\prec$-asymptotic pairs for any $\prec\,\in\tilde\O_{\TTT}$, equivalently, there exists a multiorder $(\tilde\O,\nu,G)$ on $G$, such that for $\nu$-almost any $\prec\,\in\tilde\O$, there are no $\prec$-asymptotic pairs in $(Y,G)$.
	\end{abstract}
	
	\maketitle
	\section{Introduction}
	In a classical topological dynamical system $(X,T)$, where $X$ is a compact metric space with a metric $d_X$ and $T:X\to X$ is a homeomorphism, an \emph{asymptotic pair} is defnined as a pair of distinct points $x,x'\in X$ such that
	\begin{equation}
		\lim\limits_{k\to+\infty}d_X(T^k(x),T^k(x'))=0.
	\end{equation}
	The set of all aymptotic pairs in $(X,T)$ does not depend on the choice of the metric~$d_X$.
	
	Using the notion of asymptotic pairs it is possible to characterize topological $\Z$-actions of entropy zero. Firstly, in 2002, F.~Blanchard, B.~Host and S.~Ruette proved the following theorem.
	\begin{thm}[Blanchard-Host-Ruette]\cite[Proposition~1]{BHR}
		Let $(X,T)$ be an invertible topological dynamical system with positive topological entropy. Then $(X,T)$ has asymptotic pairs.
		More precisely, the set of points belonging to asymptotic pairs has measure $1$ for any ergodic measure on $X$ with positive entropy.
	\end{thm}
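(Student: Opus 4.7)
The plan is to reduce to a symbolic setting via a fine generating partition, exploit positive entropy to show that the forward coding has non-trivial fibers, and then pass to a limit over a refining sequence of partitions to produce a genuine asymptotic partner for almost every point.

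Fix an ergodic $T$-invariant measure $\mu$ on $X$ with $h_\mu(T)>0$ (which exists by the variational principle). For each $\eps>0$, by Krieger's generator theorem I would choose a finite measurable partition $\mathcal P_\eps$ of $X$ which is a two-sided generator for $(X,T,\mu)$, with cells of diameter less than $\eps$ and $\mu$-null boundaries. Consider the forward coding $\pi_\eps^+\colon X\to\mathcal P_\eps^{\N}$, $x\mapsto(\mathcal P_\eps(T^nx))_{n\ge 0}$; its fibers are the atoms of $\F_\eps:=\bigvee_{n\ge 0}T^{-n}\mathcal P_\eps$. If $\F_\eps$ were equal to the full Borel $\sigma$-algebra modulo $\mu$, then by $T$-invariance $\bigvee_{n\ge 1}T^{-n}\mathcal P_\eps=T^{-1}\F_\eps$ would also equal the full $\sigma$-algebra modulo $\mu$, and the entropy formula would give $h_\mu(T,\mathcal P_\eps)=H_\mu(\mathcal P_\eps\mid\bigvee_{n\ge 1}T^{-n}\mathcal P_\eps)=0$, contradicting $h_\mu(T)>0$. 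Hence $\pi_\eps^+$ is not $\mu$-a.e.\ injective, so the set
\[
A_\eps:=\{x\in X:\exists\,x'\neq x\text{ in }X,\ \mathcal P_\eps(T^nx)=\mathcal P_\eps(T^nx')\text{ for all }n\ge 0\}
\]
has positive $\mu$-measure. By ergodicity, $\mu$-a.e.\ $x$ satisfies $T^Nx\in A_\eps$ for some $N=N(x,\eps)$; pulling back by $T^{-N}$ produces an $\eps$-asymptotic partner $x'_\eps\neq x$ with $d_X(T^nx,T^nx'_\eps)<\eps$ for every $n\ge N$.

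To convert ``$\eps$-asymptotic'' into genuinely asymptotic, I would take a refining sequence $(\mathcal P_{\eps_k})$ of such generators with $\eps_k\downarrow 0$. The $\sigma$-algebras $\F_{\eps_k}$ then form an increasing filtration whose join equals the full Borel $\sigma$-algebra modulo $\mu$, while each $\F_{\eps_k}$-atom is non-trivial on a $\mu$-full-measure set by the above. Disintegrating $\mu$ along each $\F_{\eps_k}$, a Borel selection of a point of the atom distinct from $x$, combined with martingale convergence along the filtration, would yield a measurable partner map $\Phi\colon X\to X$ with $\Phi(x)\neq x$ $\mu$-a.e.\ such that $(x,\Phi(x))$ is a genuine asymptotic pair.

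The main obstacle I anticipate is exactly this final limiting step: a naïve pointwise limit of the partners $x'_{\eps_k}$ could collapse to $x$ itself and yield no asymptotic partner. The technical heart of the argument is to ensure that, on a $\mu$-full-measure set, the conditional measure on the $\F_{\eps_k}$-atom of $x$ is not the Dirac mass at $x$ for every $k$, so that a partner bounded away from $x$ at each scale can be extracted by compactness of $X$ and a diagonal argument. This is where the bulk of the technical work of the Blanchard-Host-Ruette proof resides.
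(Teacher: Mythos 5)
Your first half is sound and matches the standard opening of the Blanchard--Host--Ruette argument: for a sufficiently fine partition $\mathcal{P}$ with $h_\mu(T,\mathcal{P})>0$, the one-sided coding $\sigma$-algebra $\bigvee_{n\ge0}T^{-n}\mathcal{P}$ cannot equal $\Sigma_X$ modulo $\mu$ (your computation via $h_\mu(T,\mathcal{P})=H_\mu(\mathcal{P}\mid\bigvee_{n\ge1}T^{-n}\mathcal{P})$ is correct), so a positive-measure set of points admits a distinct partner tracking their forward orbit to within $\eps$. Two minor remarks: you do not need Krieger's theorem or generating partitions, only $h_\mu(T,\mathcal{P}_\eps)>0$, which holds for every fine enough partition; and $A_\eps$ is only analytic, so its measurability should be justified by universal measurability of analytic sets, as this paper does elsewhere via the Measurable Projection Theorem.

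The gap is exactly where you place it, and the fix you sketch cannot work. The $\sigma$-algebras $\mathcal{F}_{\eps_k}=\bigvee_{n\ge0}T^{-n}\mathcal{P}_{\eps_k}$ increase to the full Borel $\sigma$-algebra pointwise (already $\bigvee_k\mathcal{P}_{\eps_k}$ separates points once $\mathrm{diam}\,\mathcal{P}_{\eps_k}\to0$), so the atom of $\mathcal{F}_{\eps_k}$ containing $x$ shrinks to the singleton $\{x\}$. Hence any partner selected from that atom necessarily converges to $x$, and no martingale, compactness, or diagonal argument can keep it bounded away from $x$; meanwhile the shifted partners $T^{-N_k}z_k$ that you actually construct carry no usable information in the limit, since their proximity to the orbit of $x$ is guaranteed only from time $N_k\to\infty$ onward. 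The missing idea is to merge all scales into a \emph{single} $\sigma$-algebra by delaying each scale in time: take a refining sequence $(\mathcal{Q}_n)$ with diameters tending to $0$ and set $\Theta^+=\bigvee_{n}\bigvee_{k\ge c_n}T^{-k}\mathcal{Q}_n$ for a suitably fast-growing sequence $(c_n)$. If $x\neq x'$ lie in a common atom of $\Theta^+$, then $d_X(T^kx,T^kx')\le\mathrm{diam}\,\mathcal{Q}_n$ for all $k\ge c_n$ and all $n$, i.e., the pair is genuinely asymptotic; and an entropy computation relative to the tail (the Pinsker $\sigma$-algebra) shows that $\Theta^+\neq\Sigma_X$ modulo $\mu$ whenever $h_\mu(T)>0$, so these atoms are nontrivial on a set of positive, hence (by invariance of the asymptotic relation and ergodicity) full, measure. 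This delayed-coding construction is precisely Lemma~4 of \cite{BHR}, generalized as Lemma~\ref{lemma_zhang} here, which the proofs of Theorems~\ref{main_thm} and~\ref{cor_thm} invoke as a black box; your proposal reconstructs the elementary half of that lemma but omits the construction that is its actual content.
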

	Later, in 2010, T.~Downarowicz and Y.~Lacroix proved that every topological dynamical system of entropy zero is a factor of a system with no asymptotic pairs~\cite[Lemma~4.3]{DL}. These two results combined give a full characterization of zero-entropy systems as factors of systems with no asymptotic pairs. There have been many attempts to generalize this characterization for actions of groups other than $\Z$. Any such attempt requires providing a definition of an asymptotic pair fitting the more general setup.
	So far, the best results were obtained by W. Huang, L. Xu and Y. Yi in 2014 \cite{HuXuYi}, and by W. Bułatek, B. Kamiński and J. Szymański in 2016 \cite{BuKa}. These two teams of authors independtently proved that if $G$ is an infinite amenable group which is orderable, then for every topological $G$-action of positive entropy, there exists a pair which is asymptotic with respect to the invariant order (note that this order need not be of type~$\Z$). However, none of the definitions provided so far can be used in the case of an arbitrary countable amenable group (which need not admit an invariant order). Recently T. Downarowicz, P. Oprocha, M. Wi\c{e}cek and G. Zhang considered orders of type~$\Z$ on countable groups. This approach opens a  possibility of creating a new definition of an asymptotic pair in systems with actions of countable groups, generalizing the classical one. A multiorder, introduced in~\cite{DOWZ}, is a collection of orders of type~$\Z$, invariant under a specific action of the group and supporting an invariant probability measure~$\nu$.
	A special type of multiorders, also described in~\cite{DOWZ} and in this paper called \emph{tiling-based multiorders} can be viewed as a \tl\ counterpart of multiorders (which by definition depend on an \im).  With the aid of multiorders of these two types, it is possible to generalize both the Blanchard-Host-Ruette and Downarowicz-Lacroix theorems and provide a full characterization of \tl\ $G$-actions of entropy zero in terms of asymptotic pairs, where $G$ is an arbitrary countable amenable group.
	
	Section~\ref{s2} of this paper contains the definitions of a \emph{multiorder} and a \emph{multiordered dynamical system}, as well as some useful theorems concerning the properties of these notions.
	
	Section~\ref{s3} begins with the definitions of a \emph{$\prec$-asymptotic pair} in a topological dynamical system with an action of a countable group $G$, where $\prec$ is an order of type $\Z$ on $G$, and a \emph{$\varphi$-asymptotic pair} in a topological system which has a multiorder as a measure-theoretic factor for some $G$-invariant measure. These definitions are followed by the formulation and proof of one of the main theorems of this paper, the generalization of the Blanchard-Host-Ruette Theorem.
	
	Section~\ref{s4} consists of preliminaries necessary to formulate and prove the series of theorems presented in Section~\ref{section_extension}. It starts with a short review of principal extensions, zero-dimensional systems and array systems. The rest of the section is devoted to tilings, tiling systems and tiling-based multiorders together with some important properties of these objects. We also define a \emph{topologically multiordered dynamical system} as a topological system which factors to a tiling-based multiorder through a tiling system. We end this section by introducing a new class of \emph{odometric tiling systems}.
	
	In Section~\ref{section_extension} we present a series of theorems leading to a characterization of zero-entropy systems in terms of asymptotic pairs. The key theorem of this section asserts that for every \tl\ $G$-action $(X,G)$ of entropy zero (where $G$ is a countable amenable group) and every tiling-based multiorder $\tilde{\O}_{\TTT}$ of entropy zero there exists an extension $(Y,G)$ of $(X,G)$ which is topologically multiordered by $\tilde{\O}_{\TTT}$ and has no $\prec$-asymptotic pairs for any  $\prec\,\in\tilde{\O}_{\TTT}$. This, combined with the main result of Section~\ref{s3}, allows us to conclude the paper by stating a full characterization of zero-entropy topological actions of countable amenable groups
	as ``factors of systems with no asymptotic pairs'', which generalizes \cite[Theorem~4.1]{DL}.
	
	\section{Multiorders and multiordered dynamical systems}\label{s2}
	This section is devoted to familiarize the reader with the definitions of a multiorder and a multiordered dynamical system and with some important properties of these objects. The section is organized into three subsections. In the first one we provide the general definition of a multiorder on any countable group $G$ and its properties valid in any countable group $G$. In the second subsection, we focus on countable amenable groups, we remind the definition of an amenable group using the notion of a F{\o}lner sequence, and present some crucial properties of multiorders on such groups. Finally, the third subsection is devoted to multiordered dynamical systems and their specific properties. In the entire section we skip the proofs of all theorems and lemmas, which (unless indicated otherwise) can be found in the paper \cite{DOWZ}.
	\subsection{Multiorders}
	\begin{defn}
		Let $G$ be a countable set. An \emph{order of type $\Z$} on $G$ is a total order\footnote{By a total order on $G$ we mean a transitive relation $\prec$ such that for every $a,b\in G$ exactly one of the alternatives holds: either $a\prec b$ or $b\prec a$, or $a=b$.} $\prec$ on $G$ such that all order intervals $[a,b]^{\prec}=\{a,b\}\cup\{g\in G: a\prec g\prec b\}$ ($a,b\in G,\ a\prec b$) are finite and there are neither minimal nor maximal elements in~$G$.
	\end{defn}
	\begin{rem}
		In case there exists a maximal (but not minimal) element in $G$, or the other way round, we say that the order is of type $-\N$ or $\N$, respectively
	\end{rem}
	The set $\tilde{\O}$ of all orders on $G$ of type $\Z$ is a subset of the family of all relations on $G$ which can be viewed as the topological space $\{0,1\}^{G\times G}$. Hence, $\tilde{\O}$~inherits from $\{0,1\}^{G\times G}$ a natural topological and Borel structure. When $G$ is a countable group, $G$~acts on $\tilde{\O}$ by homeomorphisms as follows: for $g\in G$ and $\prec\,\in\tilde{\O}$ we have $g(\prec)=\,\prec'$ where $\prec'$ is given by
	\begin{equation}\label{ac}
		a\prec' b \iff ag\prec bg.
	\end{equation}
	\begin{defn}\label{def_multiorder}
		Let $\nu$ be a $G$-invariant Borel probability measure supported by $\tilde\O$. By a \emph{multiorder} (on $G$) we mean the measure-preserving $G$-action $(\tilde\O,\Sigma_{\tilde\O},\nu,G)$, where $\Sigma_{\tilde\O}$ is the Borel sigma-algebra on $\tilde\O$.
	\end{defn}
	For brevity, in what follows, we will write $(\tilde{\O},\nu,G)$ instead of $(\tilde{\O},\Sigma_{\tilde{\O}},\nu,G)$. Note that the natural order $<$ on $\Z$ is a fixed point of the action of $\Z$ given by the additive version of formula~\eqref{ac}. Hence the system $(\{<\}, \delta_{<},\Z)$, where $\delta_{<}$ is the Dirac measure at $<$, is a multiorder. However, there exist many more multiorders on $\Z$ consisting of non-standard orders (see e.g.\ \cite[Example~B.9]{DOWZ}).
	
	There are other ways of representing multiorders on countable groups. Especially useful is representing a multiorder as a family of \emph{anchored bijections} from $\Z$ to $G$.
	\begin{defn}\label{ddd}
		With each $\prec\ \in\tilde\O$ we associate the bijection $\mathsf{bi}_\prec:\Z\to G$ which is \emph{anchored} (i.e., satisfies $\mathsf{bi}_\prec(0)=e$, where $e$ is the unit of $G$), and on the rest of $\Z$ is determined by the property:
		\begin{equation}\label{bsu}
			\mathsf{bi}_\prec(i) = g \iff \mathsf{bi}_\prec(i+1)= \mathsf{succ}_\prec(g), \ 
			i\in\Z,\ g\in G,
		\end{equation}
		where $\mathsf{succ}_\prec(g)$ denotes the successor of $g$ with respect to $\prec$.
		
		On the set $\mathbf{Bi}(\Z,G)$ of anchored bijections $\mathsf{bi}:\Z\to G$ we define an action of $G$ by the following formula:
		\begin{equation}\label{bij}
			(g(\mathsf{bi}))(i)= \mathsf{bi}(i+k)\cdot g^{-1}, \text{ where $k$ is such that } g=\mathsf{bi}(k).
		\end{equation}
	\end{defn}
	By a straightforward calculation, it can be showed that $\mathbf{Bi}(\Z,G)$ is a $G_{\delta}$-subset of the compact space ${\overline{G}}^{\Z}$, where $\overline{G}$ is the Alexandroff Compactification of~$G$. Hence $\mathbf{Bi}(\Z,G)$ is a Polish and, in an appropriate metric, totally bounded space (this fact will be essentially used in Section~3).
	\begin{thm}\cite[Proposition 2.11]{DOWZ}\label{odw1} The assignment $\psi:\tilde \O\to \mathbf{Bi}(\Z,G)$, given by 
		${\psi(\prec)=\mathsf{bi}_\prec}$, is a measurable bijection with a continuous inverse, which intertwines the action of $G$ on $\tilde\O$ given by \eqref{ac} with the action of $G$ on $\mathbf{Bi}(\Z,G)$ given by~\eqref{bij}. 
	\end{thm}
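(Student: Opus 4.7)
The plan is to verify the five ingredients in order: $\psi$ is well-defined, bijective, measurable, has continuous inverse, and is $G$-equivariant.

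\emph{Well-definedness and bijectivity.} Given $\prec\,\in\tilde\O$, the absence of a maximal element together with finiteness of order intervals forces every $g\in G$ to have a unique successor, and similarly a unique predecessor. Starting from $\mathsf{bi}_\prec(0)=e$ and iterating $\mathsf{succ}_\prec$ forward and backward defines the values of $\mathsf{bi}_\prec$ on all of $\Z$, and the resulting sequence is injective. Surjectivity onto $G$ follows because for any $g\in G$ the interval between $e$ and $g$ is finite, so $g$ is reached after finitely many steps in one of the two directions. This makes $\psi$ well-defined. For the inverse I would explicitly set $\psi^{-1}(\mathsf{bi})=\prec$, where $a\prec b\iff \mathsf{bi}^{-1}(a)<\mathsf{bi}^{-1}(b)$, check that this is a total order of type $\Z$ (inheriting the structure of $(\Z,<)$ via a bijection), and verify $\psi\circ\psi^{-1}=\mathrm{id}$ and $\psi^{-1}\circ\psi=\mathrm{id}$ from the defining recursion \eqref{bsu}.

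\emph{Continuity of $\psi^{-1}$ and measurability of $\psi$.} For continuity, fix $\mathsf{bi}\in\mathbf{Bi}(\Z,G)$ and any pair $(a,b)\in G\times G$; let $k_a=\mathsf{bi}^{-1}(a)$, $k_b=\mathsf{bi}^{-1}(b)$. The cylinder $U=\{\mathsf{bi}'\in\mathbf{Bi}(\Z,G):\mathsf{bi}'(k_a)=a,\ \mathsf{bi}'(k_b)=b\}$ is an open neighborhood of $\mathsf{bi}$, and since every $\mathsf{bi}'\in U$ is a bijection with $\mathsf{bi}'^{-1}(a)=k_a$, $\mathsf{bi}'^{-1}(b)=k_b$, the $(a,b)$-coordinate of $\psi^{-1}(\mathsf{bi}')$ is constant on $U$. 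Hence each coordinate map is locally constant, giving continuity into $\{0,1\}^{G\times G}$. For measurability of $\psi$ it suffices to show that for each $i\in\Z$ and $g\in G$ the set $\{\prec:\mathsf{bi}_\prec(i)=g\}$ is Borel in $\tilde\O$. For $i\ge 0$ this set equals
\begin{equation*}
\bigcup_{g_0,\ldots,g_i}\bigcap_{j=0}^{i}\{\prec\,:\,\mathsf{succ}_\prec(g_{j-1})=g_j\},
\end{equation*}
where $g_0=e$, $g_i=g$ (with $g_{-1}$ absent), and the event $\mathsf{succ}_\prec(h)=h'$ is expressible as $\{h\prec h'\}\cap\bigcap_{h''\ne h,h'}(\{h''\prec h\}\cup\{h'\prec h''\})$, a countable intersection of cylinders in $\{0,1\}^{G\times G}$; the case $i<0$ is symmetric. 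Hence $\psi$ is Borel.

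\emph{Intertwining.} Fix $\prec\,\in\tilde\O$, $g\in G$, and set $\prec'=g(\prec)$. Writing $k=\mathsf{bi}_\prec^{-1}(g)$, I claim $\mathsf{bi}_{\prec'}(i)=\mathsf{bi}_\prec(i+k)g^{-1}$ for every $i\in\Z$, which is the content of \eqref{bij}. At $i=0$ this gives $\mathsf{bi}_\prec(k)g^{-1}=gg^{-1}=e$, so the candidate is anchored. For the successor relation, the equivalence $a\prec' b\iff ag\prec bg$ shows that right-multiplication by $g$ is an order isomorphism from $(G,\prec')$ to $(G,\prec)$; hence the $\prec'$-successor of $h$ is $\mathsf{succ}_\prec(hg)\cdot g^{-1}$. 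A one-line induction then confirms that the proposed formula satisfies \eqref{bsu} relative to $\prec'$, and by uniqueness of the anchored bijection solving \eqref{bsu} it equals $\mathsf{bi}_{\prec'}$.

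The most technical part is the measurability of $\psi$, since the values of $\mathsf{bi}_\prec$ are implicitly defined through arbitrarily long successor chains; but once it is recognized that $\mathsf{succ}_\prec(h)=h'$ is itself a countable Boolean combination of cylinders in $\{0,1\}^{G\times G}$, all other steps reduce to straightforward bookkeeping.
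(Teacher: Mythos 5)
Your proof is correct: all five ingredients (well-definedness via iterated successors/predecessors, bijectivity, local constancy of the coordinates of $\psi^{-1}$ on cylinders of $\overline{G}^{\Z}$, Borel-ness of $\{\prec\,:\mathsf{bi}_\prec(i)=g\}$ via countable unions of successor chains, and the equivariance check through the order isomorphism $h\mapsto hg$ combined with uniqueness of the anchored solution of \eqref{bsu}) are handled properly, and this is the standard argument for this statement. Note that the paper itself omits the proof and defers to \cite[Proposition~2.11]{DOWZ}, so there is no in-paper proof to compare against; the only blemish in your write-up is the indexing of the intersection (it should run over $j=1,\dots,i$), which is purely cosmetic.
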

	By convention, if $\nu$ is an invariant measure on $\tilde{\O}$ we will denote the push-forward measure on $\mathbf{Bi}(\Z,G)$ by the same letter $\nu$. We will denote by $k^{\prec}$ the element $\mathsf{bi}_{\prec}(k)$. Using this notation we have:
	\begin{cor}
		For every order $\prec\,\in\tilde{\O}$ and every $i\in\Z$ and $g\in G$ we have
		\begin{equation}\label{16}
			i^{g(\prec)} = (i+k)^{\prec}\cdot g^{-1}, \text{ \ equivalently \ }i^{\prec}\cdot g^{-1}= (i-k)^{g(\prec)},
		\end{equation}
		where $k$ is the unique integer such that 
		\begin{equation}\label{17}
			g=k^\prec, \text{ \ equivalently \ } g^{-1}=(-k)^{g(\prec)}.
		\end{equation}
	\end{cor}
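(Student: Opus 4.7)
My plan is to reduce the corollary to a routine unpacking of the explicit formulas for the two $G$-actions, using Theorem~\ref{odw1} as the bridge. By definition $i^\prec = \mathsf{bi}_\prec(i)$, and because $\psi$ conjugates the action~\eqref{ac} on $\tilde\O$ with the action~\eqref{bij} on $\mathbf{Bi}(\Z,G)$, one has $\mathsf{bi}_{g(\prec)} = g(\mathsf{bi}_\prec)$. This identification reduces the computation of $i^{g(\prec)}$ to evaluating the right-hand side of~\eqref{bij} at the integer $i$.

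Carrying out that evaluation, one obtains
\[
i^{g(\prec)} \;=\; \mathsf{bi}_{g(\prec)}(i) \;=\; (g(\mathsf{bi}_\prec))(i) \;=\; \mathsf{bi}_\prec(i+k)\cdot g^{-1} \;=\; (i+k)^\prec \cdot g^{-1},
\]
where $k$ is the integer characterized by $g=\mathsf{bi}_\prec(k)=k^\prec$; its uniqueness is automatic from the bijectivity of $\mathsf{bi}_\prec$. This gives the first identity in~\eqref{16} together with the first half of~\eqref{17}.

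The remaining ``equivalently'' clauses then follow by purely formal substitutions. Replacing $i$ with $i-k$ in the identity just established yields $(i-k)^{g(\prec)} = i^\prec\cdot g^{-1}$, which is the second form in~\eqref{16}. Setting $i=0$ in this second form and invoking the anchoring condition $0^\prec = e$ produces $g^{-1} = (-k)^{g(\prec)}$, the second form in~\eqref{17}.

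There is no genuine conceptual obstacle: once Theorem~\ref{odw1} has been invoked, the argument is a one-line consequence of the definition~\eqref{bij}. The only item requiring care is the bookkeeping of the index shifts and of the role of the anchor $e$, which is precisely the reason it is convenient to record this computation as a separate corollary rather than repeating it inside later proofs.
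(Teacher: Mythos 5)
Your proof is correct and follows the only natural route: the paper itself omits the proof (deferring all of Section~2 to [DOWZ]), and the corollary is exactly the unpacking of Theorem~\ref{odw1} together with formula~\eqref{bij} that you carry out, with the index-shift and anchoring substitutions handled properly.
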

	\subsection{Multiorders on countable amenable groups}	
	From now on, $G$ will always denote a countable, discrete group.
	
	\begin{defn}
		Fix an $\varepsilon>0$ and let $K\subset G$ be finite. A finite set $F\subset G$ is called $(K,\varepsilon)$-invariant, if it satisfies
		\begin{equation*}
			\frac{|KF\triangle F|}{|F|}<\varepsilon.
		\end{equation*}
	\end{defn}
	
	\begin{defn}
		By a F{\o}lner sequence on $G$, we mean a sequence $(F_n)_{n\in\N}$ of finite subsets of $G$, such that for every finite set $K\subset G$ and every $\varepsilon>0$, for all sufficiently large indices $n$, the sets $F_n$ are $(K,\varepsilon)$-invariant.
	\end{defn}
	Using the notion of a F{\o}lner sequence, we may formulate a simple definition of amenability in case the group $G$ is discrete and countable.
	\begin{defn}
		A discrete, countable group $G$ is called \emph{amenable} if there exists a F{\o}lner sequence on $G$.
	\end{defn}
	
	\begin{thm}\label{exi}\cite[Theorem 2.6]{DOWZ}
		Let $G$ be a countable amenable group. There exists a multiorder $(\tilde\O,\nu,G)$ of (measure-theoretic) entropy zero. 
	\end{thm}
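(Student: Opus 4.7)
My strategy is to produce $\nu$ as a weak-$*$ limit of F{\o}lner averages of Dirac masses in the anchored-bijection picture of Theorem~\ref{odw1}, with invariance forced by the averaging and entropy zero by a low-complexity choice of the point being averaged. Working on $\mathbf{Bi}(\Z,G)\subset\overline{G}^{\Z}$ is convenient: the ambient space is compact, so weak-$*$ limits exist, and $\mathbf{Bi}(\Z,G)$ itself is a $G_{\delta}$, which allows me to argue that the limit is supported on genuine type-$\Z$ orders.

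Fix a F{\o}lner sequence $(F_n)$ with $e\in F_n\subset F_{n+1}$ and $\bigcup_n F_n=G$. For each $n$ choose a single anchored bijection $\mathsf{bi}_n\in\mathbf{Bi}(\Z,G)$ enumerating $F_n$ on an interval $\{-a_n,\dots,b_n\}\ni 0$ (with $a_n,b_n\to\infty$, to secure the type-$\Z$ property in the limit) and extending to $G\setminus F_n$ by a fixed canonical rule. Form
\begin{equation*}
\nu_n \;=\; \frac{1}{|F_n|}\sum_{g\in F_n}\delta_{g(\mathsf{bi}_n)}.
\end{equation*}
The standard F{\o}lner estimate $\|h_*\nu_n-\nu_n\|_{\mathrm{TV}}\le |hF_n\triangle F_n|/|F_n|\to 0$ shows that any weak-$*$ cluster point $\nu$ of the sequence $(\nu_n)$ is $G$-invariant; the closed anchoring condition $\mathsf{bi}(0)=e$ together with the growing coverage of $G$ by the $F_n$'s ensures that $\nu$ is supported on $\mathbf{Bi}(\Z,G)$, equivalently, on genuine type-$\Z$ orders.

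The main obstacle is arranging $h_{\nu}(\tilde{\O},G)=0$. A fully random ordering inside each $F_n$ would contribute entropy density $\log|F_n|!/|F_n|\to\infty$, so the averaged points $\mathsf{bi}_n$ must be engineered carefully. My preferred implementation is to invoke the existence of zero-entropy exact tilings on any countable amenable group (Ornstein--Weiss quasi-tilings, or their Downarowicz--Huczek--Zhang sharpening): cover $F_n$ by translates of finitely many tile shapes, enumerate each tile by a rule depending only on its shape, and concatenate the tiles in a deterministic order dictated by the tiling. The $\mathsf{bi}_n$'s then all arise by restricting one fixed zero-entropy tiling of $G$ to successively larger F{\o}lner windows, so their $G$-orbit closure in $\mathbf{Bi}(\Z,G)$ inherits sub-exponential pattern complexity and has zero topological entropy. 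Any invariant measure supported on that closure, in particular~$\nu$, must then have zero measure-theoretic entropy.

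I expect the delicate step to be the last one: ensuring simultaneously that (i)~the tiling-based construction really yields an orbit closure of zero topological entropy, (ii)~every $\nu_n$ is supported on that single closure, and (iii)~the choice of $a_n,b_n$ centering $F_n$ around $0$ is compatible with the tiling partition of $F_n$ (so the type-$\Z$ property genuinely survives in the weak-$*$ limit). Routine but slightly technical verifications are also required to confirm that the $G_{\delta}$ subset $\mathbf{Bi}(\Z,G)$ carries the full mass of the limit, using that each $g\in G$ lies in $F_n$ for all large~$n$ and is therefore coordinate-wise recorded in $\mathsf{bi}_n$. Once (i)--(iii) are in place the theorem follows.
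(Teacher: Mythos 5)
The paper itself does not prove this statement; it quotes it from \cite[Theorem~2.6]{DOWZ}, and the construction intended there is the tiling-based one that resurfaces in Section~4 of this paper: take a tiling system $\TTT$ of topological entropy zero (which exists for every countable amenable group by \cite{DHZ}), fix an ordering of subtiles, take any $G$-invariant measure on the \emph{compact} system $\TTT$, and push it forward under $\bT\mapsto\,\prec_{\bT}$. Lemma~\ref{4.14} guarantees that the straight elements $\TTT_{\mathsf{STR}}$ carry full measure for every invariant measure, so the pushforward is a genuine multiorder, and its entropy is zero simply because it is a measure-theoretic factor of a zero-entropy system. Your central idea --- using zero-entropy exact tilings to keep the complexity of the enumerations subexponential --- is exactly the right one and matches the source.

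However, your implementation via weak-$*$ limits of the empirical measures $\nu_n=\frac{1}{|F_n|}\sum_{g\in F_n}\delta_{g(\mathsf{bi}_n)}$ has a genuine gap at the step you dismiss as routine: showing that the limit $\nu$ is carried by $\mathbf{Bi}(\Z,G)$. This set is a non-closed $G_\delta$ in the compact space $\overline{G}^{\Z}$, so mass can escape in the limit. Your justification (anchoring is a clopen condition, and every $h\in G$ eventually lies in $F_n$) controls only the coordinate at $0$: under the averaging, the position at which a fixed $h$ is recorded in the translate $g(\mathsf{bi}_n)$ is $\mathsf{bi}_n^{-1}(hg)-\mathsf{bi}_n^{-1}(g)$, which varies with $g$ and could be unbounded. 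Nothing in your sketch rules out (i) coordinates $\omega(i)$, $i\neq 0$, converging to the point at infinity, (ii) loss of surjectivity onto $G$, or (iii) the limit order degenerating to type $\N$ or $-\N$. What is actually needed is a uniform tightness estimate: for each $i$ and $\eps>0$ a finite $K\subset G$ with $\nu_n\{\omega:\omega(i)\in K\}>1-\eps$ for all large $n$, and dually a uniform bound on where each $h$ appears. For tiling-based enumerations these do hold, because one step along the enumeration displaces you by an element of the fixed finite set $\bigcup_{S}SS^{-1}$ except at a small proportion of tile boundaries --- but this is precisely the F{\o}lner property of the resulting orders (\cite[Theorem~B.3]{DOWZ}, cf.\ Lemma~\ref{lem_invariant}), i.e.\ the heart of the proof rather than a routine verification. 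The entropy step also needs repair: ``topological entropy of the orbit closure in $\mathbf{Bi}(\Z,G)$'' is problematic since that space is not compact; the clean argument is that $\prec_{\bT}$ is a measurable function of $\bT$, so $\nu$ is a factor of an invariant measure on the compact zero-entropy system $\TTT$, whence $h(\nu,G)=0$.
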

	
	The key property of multiorders on countable amenable groups (apart from the guaranteed existence on such groups) is that order-intervals of almost every order $\prec$ from a multiorder form a F{\o}lner sequence. In \cite{DOWZ} this phenomenon is called \emph{F{\o}lner property} of a multiorder and it was proved (see \cite[Theorem~2.5]{DOWZ}) that it is automatically satisfied by every multiorder on a countable amenable group. However, for every countable amenable group $G$, it is possible to construct (using a system of tilings) a specific multiorder on $G$ which exhibits a stronger version of the F{\o}lner property, called the \emph{uniform F{\o}lner property}.
	
	\begin{defn}\label{uniform_folner}
		We say that a multiorder $({\tilde{\O}},\nu,G)$ has \emph{uniform F{\o}lner property} if for every finite set $K\subset G$ and every $\varepsilon>0$ there exists $N\in\mathbb{N}$ such that, for $\nu$-almost every $\prec\,\in\tilde{\O}$, any order interval of length at least $N$ is $(K,\varepsilon)$-invariant.
	\end{defn}
	\begin{thm}\cite[Corollary~B.7]{DOWZ}\label{thm_uniform}
		For every countable amenable group $G$ there exists a uniformly F{\o}lner multiorder on $G$ of entropy zero.
	\end{thm}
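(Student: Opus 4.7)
My plan is to build the multiorder from a nested sequence of F\o lner tilings of $G$, so that the combinatorics of the tiling forces long order intervals to look like individual tiles. Fix exhausting finite sets $e\in K_1\subset K_2\subset\cdots$ with $\bigcup_k K_k=G$ and a sequence $\eps_k\downarrow 0$. Using the Ornstein--Weiss machinery available for countable amenable groups, I would construct a sequence $(\T_k)_{k\in\N}$ of tilings of $G$ such that (i) each tile of $\T_k$ is a translate of one of finitely many \emph{shapes}, and every such shape is $(K_k,\eps_k)$-invariant; (ii) every $\T_{k+1}$-tile is a disjoint union of $\T_k$-tiles; and (iii) the dynamical system consisting of all such nested tilings, with the natural $G$-action, has (measure-theoretic) entropy zero. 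The zero entropy is arranged by letting $\T_{k+1}$ be determined from $\T_k$ up to a choice set whose cardinality grows subexponentially in tile size, as in the construction underlying Theorem~\ref{exi}.

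Fix once and for all, for each shape at each level, a linear enumeration of its sub-tiles at the next lower level. A generic point of the tiling space then determines an anchored bijection $\mathsf{bi}:\Z\to G$ by the following recursive rule: the $\T_1$-tile containing $e$ is enumerated so that $e$ occupies some position, giving a labeling of its elements by a finite interval in $\Z$; this interval is then extended on both sides by listing the remaining $\T_1$-sub-tiles of the surrounding $\T_2$-tile in the prescribed order; iterating through $k=2,3,\ldots$ yields a bijection $\mathsf{bi}:\Z\to G$ and hence, via Theorem~\ref{odw1}, an order $\prec$ of type $\Z$ on $G$. The induced map from the tiling system to $\tilde\O$ is Borel and $G$-equivariant, so the push-forward of the invariant measure on the tiling system is the desired $\nu$; entropy zero descends to this factor.

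The uniform F\o lner property is the heart of the argument. Given $K$ and $\eps$, choose $k$ with $K\subseteq K_k$ and $\eps_k<\eps/2$, and let $M$ be the maximum cardinality of a level-$(k{+}1)$ shape. Take $N:=\lceil 2M/\eps\rceil$. For any $\prec$ arising from the construction and any $i\in\Z$, the order-interval $I=\{i^\prec,\dots,(i{+}N{-}1)^\prec\}$ meets at most two $\T_{k+1}$-tiles partially (at its endpoints) and otherwise consists of complete $\T_{k+1}$-tiles, each a disjoint union of complete, $(K_k,\eps_k)$-invariant $\T_k$-tiles. Counting elements of $KI\triangle I$ that either straddle a $\T_k$-tile boundary in the interior or fall into one of the two edge $\T_{k+1}$-tiles yields $|KI\triangle I|<\eps|I|$. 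Crucially, this estimate holds for \emph{every} $\prec$ in the full-measure image of the tiling space, not merely in $\nu$-probability; that is what upgrades the ordinary F\o lner property of \cite[Theorem~2.5]{DOWZ} to uniformity in the sense of Definition~\ref{uniform_folner}.

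The main obstacle I anticipate is reconciling the \emph{uniform} F\o lner condition with the entropy-zero requirement simultaneously. Uniformity demands that every shape at every level be F\o lner with prescribed invariance parameters, imposing a rather rigid constraint on the finite shape alphabets; entropy zero demands that the combinatorial ambiguity in the tower of refinements be subexponential in volume. The construction must therefore select, at each level, a small family of highly invariant shapes that admit enough compatible refinements into the next level to consistently tile $G$, while keeping the number of nested-refinement patterns per unit volume subexponential. Striking this balance is precisely the technical content of the tiling construction of \cite[Appendix~B]{DOWZ}, and it is what allows one to conclude existence of the desired uniformly F\o lner, entropy-zero multiorder.
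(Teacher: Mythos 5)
Your construction is essentially the route taken in \cite[Appendix~B]{DOWZ} (to which the paper defers for this statement) and mirrored by the machinery of Section~\ref{s4} here: a zero-entropy F\o lner tiling system with ordered subtiles yields a tiling-based multiorder, and a long order interval decomposes into full, highly invariant tiles plus a boundedly small boundary remainder, which is exactly the content of Lemma~\ref{lem_invariant} and \cite[Theorem~B.3]{DOWZ}. The only slip is quantitative: your $N=\lceil 2M/\eps\rceil$ should also absorb a factor of $|K|$, since the two partial edge tiles can contribute up to $(|K|+1)\cdot 2M$ elements to $KI\triangle I$, so one needs roughly $N>4(|K|+1)M/\eps$; this is harmless.
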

	
	\subsection{Multiordered dynamical systems}
	\begin{defn}
		By a \emph{multiordered dynamical system}, denoted by $(X,\mu,G,\varphi)$, we will mean a measure-preserving $G$-action $(X,\mu,G)$ with a fixed measure-theoretic factor map ${\varphi:(X,\mu,G)\to(\tilde\O,\nu,G)}$ to a multiorder on $G$ (provided such a factor map exists).
	\end{defn}
	Let $G$ and $\Gamma$ be two countable groups acting on the same probability space $(X,\mu)$. These actions are called \emph{orbit equivalent} if for $\mu$-almost every $x\in X$ we have $\{g(x)\,:\,g\in G\}=\{\gamma(x)\,:\,\gamma\in\Gamma\}$.
	
	It turns out that every multiordered $G$-action is orbit equivalent to a specific $\Z$-action such that the orbit equivalence preserves the multiorder factor, as the theorem below states.
	
	\begin{thm}\label{motooe}\cite[Theorem 3.5]{DOWZ}
		Suppose $\varphi:X\to\tilde\O$ is a measure-theoretic factor map from a measure-preserving $G$-action $(X,\mu,G)$ to a multiorder $(\tilde\O,\nu,G)$. Then $(X,\mu,G)$ is orbit-equivalent to the $\Z$-action generated by the \emph{successor map} defined as follows:
		\begin{equation}\label{full}
			S(x) = 1^\prec(x), \text{ \ where \ }\prec\ = \varphi(x),
		\end{equation}
		i.e., $S(x)=g(x)$, where $g=1^{\varphi (x)}$.
		Moreover, for any $k\in\Z$, we have
		\begin{equation}\label{TD}
			S^k(x)=k^\prec(x).
		\end{equation}
		Let $\tilde{S}$ denote the transformation on $\tilde{\O}$ defined by
		\begin{equation}\label{SforO}
			\tilde{S}(\prec)=1^{\prec}(\prec),
		\end{equation}
		i.e., $\tilde{S}(\prec)=g(\prec)$, where $g=1^{\prec}$ and $g(\prec)$ is given by the formula \eqref{ac}.
		Then $\tilde{S}$ preserves the measure $\nu$, the $\Z$-action on $\tilde{\O}$ generated by $\tilde S$ is orbit equivalent to the $G$-action given by \eqref{ac}, and $\varphi$ is a factor map from the $\Z$-action $(X,\mu,S)$ to the $\Z$-action $(\tilde{\O},\nu,\tilde{S})$.
	\end{thm}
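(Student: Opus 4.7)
The plan is to establish formula \eqref{TD}, namely $S^k(x)=k^\prec(x)$ for $\prec=\varphi(x)$ and every $k\in\Z$, since every other assertion is a short consequence. I would first note that $S$ is measurable because the map $(\prec,x)\mapsto 1^\prec(x)$ is measurable (the correspondence $\prec\mapsto\mathsf{bi}_\prec(1)$ takes values in the countable set $G$ and is measurable on $\tilde\O$), and then proceed by induction on $k\geq 0$ (the case $k\leq 0$ is symmetric, using the predecessor instead of the successor). The cases $k=0,1$ are the definitions. For the inductive step, suppose $S^k(x)=g(x)$ with $g=k^\prec$. Since $\varphi$ is a $G$-factor map, $\varphi(g(x))=g(\prec)$, so
\begin{equation*}
S^{k+1}(x)=S(g(x))=1^{g(\prec)}(g(x))=\bigl(1^{g(\prec)}\cdot g\bigr)(x),
\end{equation*}
and \eqref{16} applied with $i=1$ and $g=k^\prec$ gives $1^{g(\prec)}\cdot g=(k+1)^\prec$, closing the induction.

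Given \eqref{TD}, orbit equivalence follows immediately: for $\mu$-a.e.\ $x$, because $\mathsf{bi}_\prec\colon\Z\to G$ is a bijection, $\{S^k(x):k\in\Z\}=\{k^\prec(x):k\in\Z\}=\{g(x):g\in G\}$. The intertwining identity $\varphi\circ S=\tilde S\circ\varphi$ is direct, since $\varphi(S(x))=\varphi(1^\prec(x))=1^\prec(\prec)=\tilde S(\prec)$ by $G$-equivariance of $\varphi$ and the definition of $\tilde S$. All parallel assertions for $(\tilde\O,\nu,\tilde S)$ (including that $\tilde S$ preserves $\nu$) then follow by specializing the argument to $X=\tilde\O$, $\mu=\nu$, $\varphi=\mathrm{id}$, so only the $\mu$-invariance of $S$ remains to verify.

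For that I would decompose $X=\bigsqcup_{g\in G}A_g$ with $A_g=\varphi^{-1}(\{\prec:1^\prec=g\})$, so that $S|_{A_g}=g$. Using \eqref{17} in the form $g=1^\prec\iff g^{-1}=(-1)^{g(\prec)}$, one shows $g(A_g)=\varphi^{-1}(\{\prec:(-1)^\prec=g^{-1}\})$; hence $\{g(A_g)\}_{g\in G}$ is also a measurable partition of $X$. Then, by $G$-invariance of $\mu$,
\begin{equation*}
\mu(S^{-1}B)=\sum_{g\in G}\mu(A_g\cap g^{-1}B)=\sum_{g\in G}\mu(g(A_g)\cap B)=\mu(B).
\end{equation*}
This last step is where I expect the main difficulty: correctly identifying both $\{A_g\}$ and $\{g(A_g)\}$ as partitions of $X$ via the successor--predecessor duality \eqref{16}--\eqref{17}. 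Once the two partitions are in place, the measure preservation of $S$ reduces to a rearrangement of a sum, and the remainder of the theorem is formal.
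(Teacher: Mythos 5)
Your argument is correct. Note that the paper itself gives no proof of this statement: it is quoted as \cite[Theorem 3.5]{DOWZ}, and Section 2 explicitly skips all proofs, so there is nothing internal to compare against. Your route is the natural one and all the steps check out: the induction for \eqref{TD} closes exactly via \eqref{16} with $i=1$ (giving $1^{g(\prec)}\cdot g=(k+1)^{\prec}$, consistent with the left-action convention $h_1(h_2(x))=(h_1h_2)(x)$ used elsewhere in the paper, e.g.\ in the proof of Theorem~\ref{cor_thm}); the orbit identity follows from bijectivity of $\mathsf{bi}_\prec$; and your measure-preservation argument via the two partitions $\{A_g\}$ and $\{g(A_g)\}$ is the standard ``Kakutani skyscraper''-type rearrangement and is valid, with \eqref{17} correctly identifying $g(A_g)=\varphi^{-1}(\{\prec:(-1)^{\prec}=g^{-1}\})$ and hence $\{g(A_g)\}_{g\in G}$ as a partition. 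The only point worth making explicit is that $\varphi$ is merely a measure-theoretic factor map, so equivariance $\varphi(g(x))=g(\varphi(x))$ holds only $\mu$-almost everywhere; you should fix once and for all a $G$-invariant full-measure set on which it holds strictly, so that the inductive step and the identification of the partitions are pointwise valid there. This is routine and does not affect the correctness of the proof.
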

	Additionally, the above orbit equivalence between $(X,\mu,G)$ and $(X,\mu,S)$ preserves the conditional entropy with respect to the multiorder factor, which is captured by the following theorem.
	\begin{thm}\label{eq_entropies}\cite[Theorem 5.1]{DOWZ}
		Let $(X,\mu,G,\varphi)$ be a multiordered dynamical system and let $S$ denote the successor map defined by the formula \eqref{full}. Then, for every finite, measurable partition $\P$ of $X$ we have
		\begin{equation}
			h(\mu,G,\P|\Sigma_{\tilde{\O}})=h(\mu,S,\P|\Sigma_{\tilde{\O}}),
		\end{equation}
		where $h(\mu,G,\P|\Sigma_{\tilde{\O}})$ is the conditional (with respect to $\Sigma_{\tilde\O}$) entropy of the process $(X,\mu,\P,G)$ generated by $\P$ under the action of $G$ and $h(\mu,S,\P|\Sigma_{\tilde{\O}})$ is the analogous conditional entropy of the process $(X,\mu,\P,S)$ generated by $\P$ under the action of $\Z$ given by the iterates of $S$.
	\end{thm}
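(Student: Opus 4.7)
My strategy is to deduce the theorem from the Rudolph--Weiss theorem on the invariance of entropy under orbit equivalence of free ergodic actions of countable amenable groups, in its relative version with respect to a common factor $\sigma$-algebra. Here the common factor is $\Sigma_{\tilde{\O}}$, and the key point that makes the argument work is that the cocycle implementing the orbit equivalence between $(X,\mu,G)$ and $(X,\mu,S)$ is measurable with respect to $\Sigma_{\tilde{\O}}$.

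Concretely, Theorem~\ref{motooe} exhibits the orbit equivalence via the identity $S^{k}(x)=k^{\varphi(x)}(x)$ for $x\in X$ and $k\in\Z$. Writing $\alpha(x,k)=k^{\varphi(x)}\in G$, the cocycle $\alpha:X\times\Z\to G$ is a function of $\varphi(x)$ and $k$ alone; in particular it is $\varphi^{-1}(\Sigma_{\tilde{\O}})$-measurable in the first variable. Symmetrically, the inverse cocycle $\beta:X\times G\to\Z$, $\beta(x,g)=k$ where $g=k^{\varphi(x)}$, is also $\varphi^{-1}(\Sigma_{\tilde{\O}})$-measurable. With both cocycles subordinate to the factor, the relative Rudolph--Weiss theorem directly gives
\[
h(\mu,G,\P|\Sigma_{\tilde{\O}})=h(\mu,S,\P|\Sigma_{\tilde{\O}}).
\]

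If a self-contained argument is preferred, I would disintegrate $\mu=\int\mu_{\prec}\,d\nu(\prec)$ over $\varphi$, so that each conditional entropy becomes an integral over $\nu$ of fibre entropies. Using $S^{k}(x)=k^{\varphi(x)}(x)$, the partition $\P_{n}^{S}:=\bigvee_{k=-n}^{n}S^{-k}\P$ restricted to the fibre $\varphi^{-1}(\prec)$ coincides atom-for-atom with $\bigvee_{g\in I_{n}(\prec)}g^{-1}\P$, where $I_{n}(\prec)=\{k^{\prec}:-n\le k\le n\}$. The F{\o}lner property of multiorders (Theorem~2.5 of~\cite{DOWZ}) guarantees that, for $\nu$-almost every $\prec$, the sets $I_{n}(\prec)$ form a F{\o}lner sequence in $G$. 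A conditional Shannon--McMillan--Breiman theorem applied fibrewise then shows that the per-element limit of $\frac{1}{2n+1}H(\P_{n}^{S}|\Sigma_{\tilde{\O}})$ is on one hand $h(\mu,S,\P|\Sigma_{\tilde{\O}})$, and on the other hand equals $h(\mu,G,\P|\Sigma_{\tilde{\O}})$ by the Ornstein--Weiss identification using the random F{\o}lner sequence $I_{n}(\prec)$, combined with dominated convergence.

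The principal obstacle in either route is that the F{\o}lner sequence naturally produced by the $S$-action, $I_{n}(\prec)$, is random rather than deterministic. In the Rudolph--Weiss approach this is absorbed into the construction of Rokhlin quasi-tilings of the two actions, and the $\Sigma_{\tilde{\O}}$-measurability of $\alpha$ and $\beta$ is what allows quasi-tilings of $G$-orbits and of $\Z$-orbits to be matched without introducing information beyond $\Sigma_{\tilde{\O}}$; the careful bookkeeping of this correspondence is the technical heart of the proof. In the direct route the same issue reappears as the need for a relative SMB theorem tolerating measurably-varying F{\o}lner sequences, together with a uniform integrability estimate that justifies exchanging the limit in $n$ with the integral over~$\nu$.
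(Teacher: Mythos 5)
First, note that this paper does not actually prove the statement: it is imported verbatim from \cite[Theorem~5.1]{DOWZ}, and the opening of Section~2 announces that all proofs in that section are deferred to \cite{DOWZ}. So there is no in-paper argument to match your attempt against; the relevant comparison is with the proof in \cite{DOWZ}, which is a direct, elementary argument built on the multiorder itself (order intervals, the F{\o}lner property, and a two-inequality comparison of the conditional entropies), carried out precisely so as \emph{not} to rely on the Rudolph--Weiss theorem, of which this statement is a relative special case.

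Your first route is therefore a genuinely different derivation, and its key observation is correct: the orbit-change cocycles $\alpha(x,k)=k^{\varphi(x)}$ and $\beta(x,g)=\mathsf{bi}_{\varphi(x)}^{-1}(g)$ are measurable with respect to $\varphi^{-1}(\Sigma_{\tilde\O})$, which is invariant under both actions, and this is exactly the hypothesis of the relative Rudolph--Weiss theorem. What you do not address is that Rudolph--Weiss is stated for \emph{free} ergodic actions. Ergodicity can be handled by ergodic decomposition (orbit-equivalent actions have the same invariant sets, and conditional entropy is affine over that decomposition), but freeness is a genuine hypothesis that a multiordered system need not satisfy (already for $G=\Z$ with the one-point multiorder $\{<\}$ and a system with periodic points), so you must either dispose of the non-free case separately or pass to a suitable free extension and check that this changes neither conditional entropy. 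Your second, ``self-contained'' route is not yet a proof: you correctly reduce both sides to entropies of $\bigvee_{g\in I_n(\prec)}g^{-1}\P$ along the random F{\o}lner sequence $I_n(\prec)$, but the statement you then invoke --- that conditional entropy per element along a $\Sigma_{\tilde\O}$-measurable random F{\o}lner sequence converges to $h(\mu,G,\P|\Sigma_{\tilde{\O}})$, together with the interchange of the limit in $n$ with the integral over $\nu$ --- is precisely the technical content of the proof in \cite{DOWZ}; naming it as ``the principal obstacle'' does not discharge it.
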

	\section{Asymptotic pairs in positive entropy actions of countable amenable groups}\label{s3}
	We begin by providing the definition of a $\prec$-asymptotic pair in a topological action of a general countable group $G$. If not indicated otherwise, by a topological $G$-action $(X,G)$ we mean a compact metric space $(X,d_X)$ on which a countable group $G$ acts by homeomorphisms.
	\begin{defn}
		Let $(X,G)$ be a topological action of a countable group $G$. Let $\prec$ be an order of type $\Z$ on $G$. A pair of distinct points $x,x'\in X$ is called \emph{$\prec$\,-asymptotic} if it satisfies
		\begin{equation}\label{conv}
			\lim\limits_{k\to+\infty}d_X(k^{\prec}(x),k^{\prec}(x'))=0
		\end{equation}
		($k^{\prec}$ denotes the $k$-th element of $G$, counting from $e$, along the order~$\prec$).
	\end{defn}
	\begin{defn}\label{phi-asympt}
		Let $(X,G)$ be a topological action of a countable group $G$, such that for some $G$-invariant Borel probability measure $\mu$ on $X$ there exists a multiorder $(\tilde{\O},\nu,G)$ being a (measure-theoretic) factor of the system $(X,\mu,G)$ via a factor map $\varphi$. If two distinct points $x,x'$ of $X$ satisfy $\varphi(x)=\varphi(x')=\,\prec$ and $x,x'$ form a $\prec$-asymptotic pair, then we say that the pair $(x,x')$ is $\varphi$-asymptotic.
	\end{defn}
	We comment that the case $\varphi(x)\neq\varphi(x')$ never occurs in the actions of $\Z$ equipped with the natural order. Thus, Definition~\ref{phi-asympt} generalizes the classical definition of an asymptotic pair.
	
	Now we formulate a useful lemma by G. Zhang, which slightly generalizes Lemma~4 in \cite{BHR}.
	\begin{lem}\cite[Lemma~3.7]{Zhang}\label{lemma_zhang}
		Let $(X,T)$ be a topological dynamical system (with the action of $\Z$), let $\mu$ be a $T$-invariant, Borel probability measure on $X$ and let $\Xi$ be a $T$-invariant sub-sigma-algebra of the Borel sigma-algebra $\Sigma_X$. Then:
		\begin{enumerate}[(i)]
			\item the measure-theoretic dynamical system $(X,\Sigma_X,T,\mu)$ admits a sigma-algebra $\Theta\supset \Xi$ such that:
			\begin{itemize}
				\item $\bigvee_{n\in\Z}T^{-n}(\Theta)\overset{\mu}{=}\Sigma_X$,\footnote{The joining of two sigma-algebras $\Theta_1$, $\Theta_2$, denoted by $\Theta_1\vee\Theta_2$, is the smallest sub-sigma-algebra of $\Sigma_X$ containing both $\Theta_1$ and $\Theta_2$. Analogously, we define the joining of a countable collection of sigma-algebras.}
				\item the relative Pinsker sigma-algebra\footnote{The relative Pinsker sigma-algebra of $(X,T)$ with respect to the $T$-invariant sub-sigma-algebra $\Xi$ is defined as the largest sub-sigma-algebra of $\Sigma_X$ consisting of measurable sets $A\subset X$ such that $h(\mu, T, \{A,A^c\}|\Xi) = 0.$} $\Pi_T(X|\Xi)\overset{\mu}{=}\bigcap_{n=0}^{+\infty}T^{-n}(\Theta^+)$,\footnote{By $\Theta^+$ we denote the joining $\Theta^+=\bigvee_{k\ge 1} T^{-k}(\Theta)$.}
				\item  any pair of distinct points belonging to the same atom of $\Theta^+$ is asymptotic;
			\end{itemize} 
			\item if $h(\mu,T|\Xi)>0$, then the equality $\Theta^{+}\overset{\mu}{=}\Sigma_X$ does not hold.
		\end{enumerate} 
	\end{lem}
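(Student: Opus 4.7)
My plan is to construct $\Theta$ from a refining sequence of finite Borel partitions of $X$ with diameter tending to zero, so that the first two bullets of (i) become formal computations, the third reduces to the standard relative Rokhlin--Sinai theorem, and part (ii) falls out by contraposition.

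Using compactness of $X$, I would pick a refining sequence $\Q_0, \Q_1, \Q_2, \ldots$ of finite Borel partitions of $X$ with $\operatorname{diam}(\Q_n) < \tfrac{1}{n+1}$, which, because their atoms eventually separate points, satisfies $\bigvee_{n\geq 0}\Q_n \overset{\mu}{=} \Sigma_X$. Define
\[
\Theta = \Xi \vee \sigma\bigl(T^{-n}\Q_n : n \geq 0\bigr).
\]
Clearly $\Theta \supset \Xi$, and $\bigvee_{m\in\Z}T^{-m}\Theta$ already contains $\bigvee_{n\geq 0}\Q_n = \Sigma_X$ (take $m=n$), which gives the first bullet. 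For the atom-asymptoticity bullet, I expand
\[
\Theta^+ = \bigvee_{k\geq 1}T^{-k}\Theta \,=\, \Xi \vee \sigma\bigl(T^{-j}\Q_n : 0\leq n<j\bigr),
\]
so two distinct points $x,x'$ in the same atom of $\Theta^+$ must lie in the same atom of $T^{-j}\Q_n$ for every $0\leq n < j$, i.e.\ $d_X(T^j x, T^j x') < \tfrac{1}{n+1}$ whenever $j>n$. Since $n$ is arbitrary, $d_X(T^j x, T^j x') \to 0$ as $j\to +\infty$, and the pair is asymptotic.

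The nontrivial bullet is the Pinsker identification $\bigcap_{n \geq 0} T^{-n}\Theta^+ \overset{\mu}{=} \Pi_T(X|\Xi)$, which I expect to be the main obstacle. I would first verify that $\Theta^+$ satisfies the three hypotheses of the relative Rokhlin--Sinai theorem: (a) $T^{-1}\Theta^+ = \Xi \vee \bigvee_{k\geq 2}T^{-k}\Theta \subset \Theta^+$, so $\Theta^+$ is $T^{-1}$-sub-invariant; (b) $\Theta^+ \supset \Xi$, because $\Theta\supset\Xi$ and $T^{-1}\Xi=\Xi$; (c) $\bigvee_{n\geq 0}T^n\Theta^+ = \bigvee_{m\in\Z}T^m\Theta \overset{\mu}{=} \Sigma_X$ by the first bullet. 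The relative Rokhlin--Sinai theorem then identifies the decreasing tail with $\Pi_T(X|\Xi)$; the core of the argument is the conditional Kolmogorov--Sinai formula and a monotone-class extension from finite partitions to the infinitely generated $\Theta$, both adapted to the relative setting.

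Part (ii) is automatic by contraposition: if $\Theta^+ \overset{\mu}{=} \Sigma_X$, then $T^{-n}\Theta^+ \overset{\mu}{=} \Sigma_X$ for every $n\geq 0$, hence $\bigcap_{n\geq 0} T^{-n}\Theta^+ \overset{\mu}{=} \Sigma_X$; the identification just established forces $\Pi_T(X|\Xi) \overset{\mu}{=} \Sigma_X$, equivalently $h(\mu,T|\Xi)=0$.
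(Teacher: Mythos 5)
First, a point of comparison: the paper does not prove this lemma at all --- it is quoted from Zhang \cite[Lemma~3.7]{Zhang}, and the paper's only original contribution here is Remark~\ref{rem-tb}, observing that compactness of $X$ is used solely to produce the refining partitions and that continuity of $T$ is never used. Your construction $\Theta=\Xi\vee\sigma(T^{-n}\Q_n:n\ge0)$ is exactly the standard one (the relativization over $\Xi$ of the construction in Lemma~4 of \cite{BHR}, which is what Zhang carries out), and your handling of the first and third bullets of (i), as well as the deduction of (ii) from the Pinsker identification, is correct (modulo the harmless sign slip ``take $m=n$'' where you mean $m=-n$).

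The gap is in the step you yourself single out as the main obstacle. The three properties you verify for $\mathcal{A}=\Theta^+$ --- namely $T^{-1}\mathcal{A}\subset\mathcal{A}$, $\mathcal{A}\supset\Xi$, and $\bigvee_{n\ge0}T^{n}\mathcal{A}\overset{\mu}{=}\Sigma_X$ --- do \emph{not} imply $\bigcap_{n\ge0}T^{-n}\mathcal{A}\overset{\mu}{=}\Pi_T(X|\Xi)$, and there is no version of the (relative) Rokhlin--Sinai theorem with only these hypotheses and that conclusion: $\mathcal{A}=\Sigma_X$ satisfies all three, yet its tail is $\Sigma_X$, which differs from $\Pi_T(X|\Xi)$ precisely when $h(\mu,T|\Xi)>0$, i.e.\ precisely in the case of interest. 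Rokhlin--Sinai is an \emph{existence} statement (some exhaustive sub-invariant algebra is ``perfect''), not a statement about all of them. What makes $\Theta^+$ perfect is its specific staircase structure --- the finite partition $\Q_n$ enters only at times $j>n$ --- and the proof of the identification must exploit this: one works with the finite-stage algebras $\Theta_N=\Xi\vee\sigma(T^{-n}\Q_n:0\le n\le N)$, each generated over $\Xi$ by a single finite partition, applies the relative Pinsker formula for finite-entropy processes there, and then passes to the limit $N\to\infty$ by a martingale/continuity argument. You gesture at exactly this (``conditional Kolmogorov--Sinai formula and a monotone-class extension from finite partitions''), but as written the step is asserted rather than proved, and the claim that conditions (a)--(c) suffice is false. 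Since this identification is the entire technical content of Zhang's lemma, and your part (ii) is correctly but entirely reduced to it, the proposal as it stands establishes only the two easy bullets of (i).
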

	\begin{rem}\label{rem-tb}
		Lemma~\ref{lemma_zhang} is stated for any measure-theoretic $\Z$-action $(X,\Sigma_X,\mu,T)$, where $X$ is a compact metric space and $T$ is a homeomorphism on $X$. However, in the proof of the lemma, continuity of $T$ is not used, while compactness of $X$ is required only to guarantee the existence of a refining sequence of finite partitions $(\Q_n)_{n\in\N}$ such that the diameters of these partitions tend to $0$ as $n\to+\infty$. In fact, existence of such a sequence of finite partitions is guaranteed by the weaker condition that $X$ is totally bounded.
	\end{rem}
	Now we formulate and prove a series of theorems concerning the existence of $\varphi$-asymptotic pairs in topological systems of positive entropy.
	\begin{thm}\label{main_thm}
		Let $(X,G)$ be a topological $G$-action. Let $\mu$ be a $G$-invariant Borel probability measure on $X$ such that there exists a factor map $\varphi$ from $(X,\mu,G)$ to a multiorder $(\tilde{\O},\nu,G)$ (in other words, $(X,\mu,G,\varphi)$ is a multiordered system). Assume also that $h(\mu,G|\Sigma_{\tilde{\O}})>0$. Then, there exist $\varphi$-asymptotic pairs in $X$. Moreover, the set of points of $X$ which belong to $\varphi$-asymptotic pairs, has positive measure $\mu$.
	\end{thm}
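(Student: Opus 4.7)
The plan is to reduce the problem to Zhang's Lemma via the orbit equivalence of Theorem~\ref{motooe}. First I would work with the measurable $\Z$-action generated by the successor map $S$, for which, by that theorem, $\varphi$ remains a factor map onto $(\tilde{\O},\nu,\tilde{S})$ and $S^k(x)=k^{\varphi(x)}(x)$ for every $k\in\Z$. Because $\varphi\circ S=\tilde{S}\circ\varphi$, the sub-sigma-algebra $\Xi:=\varphi^{-1}(\Sigma_{\tilde{\O}})$ is $S$-invariant, and by Theorem~\ref{eq_entropies}, applied to any finite partition $\P$ with $h(\mu,G,\P|\Sigma_{\tilde\O})>0$ and supremised over $\P$, one obtains $h(\mu,S|\Xi)=h(\mu,G|\Xi)>0$.

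Next, I would apply Lemma~\ref{lemma_zhang} to the measurable $\Z$-action $(X,\mu,S)$ with this $\Xi$. The lack of continuity of $S$ is handled by Remark~\ref{rem-tb}: $X$ is compact metric and hence totally bounded, which is all the proof of the lemma actually requires. This yields a sigma-algebra $\Theta\supset\Xi$ such that any two distinct points in the same atom of $\Theta^+$ form an $S$-asymptotic pair, and such that $\Theta^+\neq\Sigma_X$ modulo $\mu$ (by part~(ii), since $h(\mu,S|\Xi)>0$).

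The third step is to control the $\varphi$-images of these pairs. Invariance of $\Xi$ under $S$ gives $\Xi=S^{-k}(\Xi)\subset S^{-k}(\Theta)$ for every $k\ge 1$, so $\Xi\subset\bigvee_{k\ge 1}S^{-k}(\Theta)=\Theta^+$; hence every $\Theta^+$-atom is contained in a $\Xi$-atom. Since $\tilde{\O}$ is standard Borel, the $\Xi$-atoms are, modulo $\mu$-null sets, the fibres of $\varphi$, so points sharing a $\Theta^+$-atom share the same $\varphi$-image. On the other hand, because $\Theta^+\neq\Sigma_X$ modulo $\mu$, the disintegration $\mu=\int\mu_x^{\Theta^+}\,d\mu(x)$ cannot be Dirac on a full-measure set, so the set $X_0$ of points $x$ admitting some $x'\neq x$ in the same $\Theta^+$-atom has positive $\mu$-measure.

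Finally, for $x\in X_0$ with companion $x'$, we have $\varphi(x)=\varphi(x')=:\,\prec$, and the $S$-asymptoticity $d_X(S^k x,S^k x')\to 0$ becomes $d_X(k^\prec x,k^\prec x')\to 0$ via the identity $S^k=k^\prec$ along this common fibre, which is exactly the $\varphi$-asymptotic condition. The main obstacle is aligning three individually simple ingredients: the orbit equivalence of Theorem~\ref{motooe} (to reach the $\Z$-setting where Zhang's machinery lives), Remark~\ref{rem-tb} (to overcome the lack of continuity of~$S$), and the atomic inclusion $\Xi\subset\Theta^+$ (without which the $S$-asymptotic pairs delivered by Zhang's Lemma need not live over a single order, and therefore need not translate into $\prec$-asymptotic pairs at all).
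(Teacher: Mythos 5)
Your proposal is correct and follows essentially the same route as the paper: orbit equivalence to the successor map via Theorem~\ref{motooe}, transfer of positive conditional entropy via Theorem~\ref{eq_entropies}, Zhang's Lemma~\ref{lemma_zhang} (with Remark~\ref{rem-tb} covering the non-continuity of $S$), the inclusion $\Sigma_{\tilde{\O}}\subset\Theta^+$ to force companions into a common $\varphi$-fiber, and the contradiction from $\Theta^+\neq\Sigma_X$. The only cosmetic difference is at the end, where you extract the positive-measure set from the disintegration over $\Theta^+$ rather than, as the paper does, establishing measurability of the set of points in $\varphi$-asymptotic pairs via the Measurable Projection Theorem; both settle the final claim.
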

	\begin{rem}\label{main_rem}
		We note that Theorem~\ref{main_thm} remains true if the space $X$ is metric totally bounded but not necessarily compact.
	\end{rem}
	\begin{proof}[Proof of Theorem~\ref{main_thm}]
		Let $\Sigma_X$ denote the Borel sigma-algebra on $X$. By Theorem~\ref{motooe}, $(X,\Sigma_X,\mu,G)$ is orbit equivalent to the $\Z$-action $(X,\Sigma_X,\mu,S)$ where $S$ is the successor map on $X$. By Theorem~\ref{eq_entropies} we have $h(\mu,S|\Sigma_{\tilde{\O}})=h(\mu,G|\Sigma_{\tilde{\O}})>0$. Thus, by Lemma~\ref{lemma_zhang},\footnote{In general, the map $S$ need not be continous but, as mentioned in Remark~\ref{rem-tb}, continuity is not necessary for the validity of Lemma~\ref{lemma_zhang}.} the system  $(X,\Sigma_X,\mu,S)$ admits a sigma-algebra ${\Theta\supset \Sigma_{\tilde \O}}$ such that any pair of points belonging to the same atom of the sigma-algebra $\Theta^+$ is asymptotic (w.r.t.\ the successor map~$S$) and $\Theta^{+}$ is a proper sub-sigma-algebra of $\Sigma_X$. Since $\Theta^{+}\supset \Sigma_{\tilde{\O}}$, the atoms of $\Theta^{+}$ are subsets of fibers of $\varphi$ (which are atoms of $\Sigma_{\tilde{\O}}$). Therefore, for every two points $x,x'$ belonging to the same atom of $\Theta^+$ we have $\varphi(x)=\varphi(x')$.
		
		Consider the set $A\subset X\times X$ consisting of all pairs $(x,x')$ asymptotic w.r.t.\ the map $S$. Observe that
		\begin{equation*}
			A=\bigcap_{l\ge 1}\bigcup_{n\ge 1}\bigcap_{k\ge n}(S\times S)^{-k}(\{(x,x')\in X\times X: d_X(x,x')<\tfrac{1}{l}\}).
		\end{equation*}
		Since the map $S$ is Borel-measurable, the set $A$ is Borel. Consider a pair $(x,x')\in A$, i.e.,\ such that
		\begin{equation}\label{conver10}
			\lim\limits_{k\to+\infty}d_{X}(S^k(x),S^k(x'))=0.
		\end{equation}
		By Theorem~\ref{motooe}, $S^k(x)=k^{\varphi(x)}(x)$ and $S^k(x')=k^{\varphi(x')}(x')$. Thus, the convergence \eqref{conver10} takes on the form
		\begin{equation}
			\label{conver11}\lim\limits_{k\to+\infty}d_X\bigl(k^{\varphi(x)}(x),k^{\varphi(x')}(x')\bigr)=0.
		\end{equation}
		Henceforth, the pair $(x,x')$ is $\varphi$-asymptotic if and only if it is asymptotic w.r.t.\ the map $S$ and $\varphi(x)=\varphi(x')$. Therefore, the set of all $\varphi$-asymptotic pairs is equal to
		\begin{equation*}
			\tilde{A}=A\cap\{(x,x')\in X\times X: \varphi(x)=\varphi(x')\}.
		\end{equation*}
		Observe that the set $\{(x,x')\in X\times X: \varphi(x)=\varphi(x')\}$ is the inverse image of $\triangle_{\tilde{\O}}=\{(\prec,\prec):\, \prec\,\in\tilde{\O}\}$ (which is obviously closed) via the map $\varphi\times\varphi$, hence it is Borel as well. Consequently $\tilde{A}$ is also Borel. The set $A_{\varphi}$ of all points $x\in X$ which belong to $\varphi$-asymptotic pairs is the projection of $\tilde{A}$ on the coordinate $X$ and, by the Measurable Projection Theorem, it is measurable with respect to every Borel probability measure on $X$.
		
		Assume that $\mu(A_{\varphi})=0$. Then, for $\mu$-almost every $x\in X$, $x$ does not belong to any {$\varphi$-asymptotic} pair. Therefore, for $\mu$-almost every $x\in X$, the atom of $\Theta^+$ containing $x$ must be a singleton. Consequently, since $X$ is a Lebesgue space, we have $\Theta^+\overset{\mu}{=}\Sigma_X$, which contradicts the fact that $\Theta^+$ is a proper sub-sigma-algebra of~ $\Sigma_X$. Therefore, $\mu(A_{\varphi})>0$.
	\end{proof}
	
	Obviously, in case $h(\nu,G)=0$, we have $h(\mu,G|\Sigma_{\tilde{\O}})=h(\mu,G)>0$. Hence, from Theorem~\ref{main_thm} we may deduce the following corollary.
	\begin{cor}
		Let $(X,G)$ be a topological $G$-action. Let $\mu$ be a $G$-invariant Borel probability measure on $X$, such that $(X,\mu,G,\varphi)$ is a positive-entropy multiordered system with a multiorder factor $(\tilde{\O},\nu,G)$ of (measure-theoretic) entropy zero. Then, the set of points of $X$ which belong to $\varphi$-asymptotic pairs, has positive measure $\mu$.
	\end{cor}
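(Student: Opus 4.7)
The plan is to reduce the corollary to Theorem~\ref{main_thm} by verifying its sole nontrivial hypothesis, namely $h(\mu,G|\Sigma_{\tilde\O})>0$. Once one observes that the conditional entropy coincides with the total entropy when the base of the factor has zero entropy, the corollary is an immediate consequence of Theorem~\ref{main_thm}, as the author's short remark preceding the statement already signals.

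First I would invoke the Abramov--Rokhlin-type addition formula for measure-preserving extensions of actions of countable amenable groups, applied to the factor map $\varphi:(X,\mu,G)\to(\tilde\O,\nu,G)$. It gives
\begin{equation*}
h(\mu,G) \;=\; h(\mu,G|\Sigma_{\tilde\O}) \,+\, h(\nu,G).
\end{equation*}
Under the standing assumption $h(\nu,G)=0$, this reduces to $h(\mu,G|\Sigma_{\tilde\O}) = h(\mu,G)$, and since $(X,\mu,G,\varphi)$ is assumed to be a positive-entropy multiordered system, we obtain $h(\mu,G|\Sigma_{\tilde\O})>0$. Second, I would apply Theorem~\ref{main_thm} verbatim to conclude that the set of points of $X$ belonging to $\varphi$-asymptotic pairs has positive measure $\mu$, which is exactly the content of the corollary.

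The only step requiring a small amount of care is the first one: one must ensure that the Abramov--Rokhlin addition formula is cited in a version genuinely valid for actions of countable amenable groups (rather than only for $\Z$-actions). This is well-established in the entropy theory of amenable group actions and, in the multiordered setting of the paper, can also be obtained by composing Theorem~\ref{eq_entropies} with the classical Abramov--Rokhlin formula for the $\Z$-action generated by the successor map $S$ and its factor $\tilde S$ on $\tilde\O$. I do not anticipate any serious obstacle; the whole argument is essentially a one-line deduction from Theorem~\ref{main_thm}.
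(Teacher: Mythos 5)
Your proposal is correct and follows exactly the route the paper takes: the paper's entire justification is the remark that $h(\nu,G)=0$ forces $h(\mu,G|\Sigma_{\tilde{\O}})=h(\mu,G)>0$ (i.e., the Abramov--Rokhlin identity you spell out), after which Theorem~\ref{main_thm} applies verbatim. Your extra care about citing the addition formula in a version valid for amenable group actions is a reasonable refinement of what the paper simply labels ``obviously,'' but it does not change the argument.
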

	Using Theorem~\ref{main_thm} for a specific multiordered system on the product space $X\times\tilde\O$, we obtain the main theorem of this section. In case of the one-element multiorder $(\{<\},\delta_{<},\Z)$ on $\Z$, our theorem reduces to the Blanchard-Host-Ruette Theorem. Our theorem implies the existence of asymptotic pairs in positive entropy $\Z$-actions not only along the natural order on $\Z$ but also along many non-standard ones (examples of such orders are given in \cite{DOWZ}).
	\begin{thm}\label{cor_thm}
		Let $(X,G)$ be a topological action of a countable amenable group $G$, of positive entropy. For any multiorder $(\tilde{\O},\nu,G)$ on $G$ and $\nu$-almost every $\prec\ \in\tilde{\O}$, there exists a $\prec$-asymptotic pair in $X$.
	\end{thm}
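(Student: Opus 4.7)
The plan is to apply Theorem~\ref{main_thm} to a carefully chosen joining of $X$ with the multiorder itself. By the variational principle for countable amenable group actions, $\htop(X,G)>0$ furnishes a $G$-invariant Borel probability measure $\mu$ on $X$ with $h(\mu,G)>0$. Form the product space $Y=X\times\tilde{\O}$ with the diagonal $G$-action $g(x,\prec)=(g(x),g(\prec))$, equipped with the $G$-invariant product measure $\lambda=\mu\times\nu$. The projection $\varphi=\pi_2\colon Y\to\tilde{\O}$ is a measure-theoretic factor map onto $(\tilde{\O},\nu,G)$, so $(Y,\lambda,G,\varphi)$ is a multiordered system, and the product structure (via Abramov--Rokhlin, or a direct partition computation) gives $h(\lambda,G\mid\Sigma_{\tilde{\O}})=h(\mu,G)>0$. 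The space $Y$ need not be compact, but it is totally bounded since $\tilde{\O}\subset\{0,1\}^{G\times G}$ embeds into a compact metric space; hence Theorem~\ref{main_thm} applies via Remark~\ref{main_rem}.

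Applying the theorem yields a $\lambda$-positive-measure set of points of $Y$ that belong to $\varphi$-asymptotic pairs. A $\varphi$-asymptotic pair has the form $((x,\prec),(x',\prec))$ with a common second coordinate, and under the diagonal action the $\tilde{\O}$-coordinates stay equal throughout, so the asymptotic condition in $Y$ reduces to $d_X(k^{\prec}(x),k^{\prec}(x'))\to 0$; that is, $(x,x')$ is $\prec$-asymptotic in $X$. Let
\[
D=\{\prec\,\in\tilde{\O}:\text{there exist distinct }x,x'\in X\text{ forming a }\prec\text{-asymptotic pair}\}.
\]
As the $\pi_2$-image of a Borel subset of $Y$, $D$ is analytic, hence universally measurable. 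Fubini applied to $\lambda=\mu\times\nu$ gives $\nu(D)>0$. Moreover, using formula~\eqref{16} one checks directly that whenever $(x,x')$ is $\prec$-asymptotic and $g\in G$, the pair $(g(x),g(x'))$ is $g(\prec)$-asymptotic, so $D$ is invariant under the $G$-action~\eqref{ac}.

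To upgrade $\nu(D)>0$ to $\nu(D)=1$, I would invoke the ergodic decomposition $\nu=\int\nu_\omega\,d\nu(\omega)$ of $\nu$ with respect to the $G$-action on $\tilde{\O}$. For each ergodic component, $(\tilde{\O},\nu_\omega,G)$ is itself a multiorder, and rerunning the product construction with $\nu_\omega$ in place of $\nu$ still gives conditional entropy $h(\mu,G)>0$, hence $\nu_\omega(D)>0$; ergodicity of $\nu_\omega$ together with the $G$-invariance of $D$ then forces $\nu_\omega(D)=1$ for $\nu$-almost every $\omega$, and integrating yields $\nu(D)=1$. The principal obstacle is precisely this last step: Theorem~\ref{main_thm} only delivers a positive-measure conclusion, so upgrading to a $\nu$-almost-everywhere statement for an arbitrary (possibly non-ergodic) multiorder $\nu$ requires both the $G$-invariance check for $D$ and the ergodic-decomposition argument; everything else is a fairly mechanical product-joining construction.
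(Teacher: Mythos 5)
Your proposal is correct and follows essentially the same route as the paper: the product system $(X\times\tilde{\O},\mu\times\nu,G)$ with the second-coordinate projection as multiorder factor, the totally bounded version of Theorem~\ref{main_thm} via Remark~\ref{main_rem}, the $G$-invariance of the set of orders admitting asymptotic pairs via formula~\eqref{16}, and ergodicity to upgrade positive measure to full measure. The only cosmetic difference is that the paper performs the reduction to ergodic $\nu$ at the outset rather than via an ergodic decomposition at the end.
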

	\begin{proof}
		By the variational principle for $G$-actions \cite[Variational Principle~5.2.7]{MO}, there exists an ergodic measure $\mu$ on $X$, such that the measure-theoretic entropy $h(\mu,G)$ is positive. Recall that, from the measure-theoretic point of view, $\tilde{\O}$ can be identified with a totally bounded and Polish space $\mathbf{Bi}(\Z,G)$ of all anchored bijections from $\Z$ to $G$.
		
		By the standard ergodic decomposition argument, it suffices to prove the theorem for an ergodic measure $\nu$. Consider the product space $(X\times \tilde{\O},\Sigma_X\otimes\Sigma_{\tilde{\O}})$, where $\Sigma_X$ and $\Sigma_{\tilde{\O}}$ are the Borel sigma-algebras on $X$ and $\tilde{\O}$ respectively. Observe that $\Sigma_X\otimes \Sigma_{\tilde{\O}}$ is the Borel sigma-algebra on $X\times \tilde{\O}$ for the product topology on $X\times \tilde{\O}$ (this follows from the fact that both $X$ and $\tilde{\O}$ are separable metric spaces). The measure theoretic dynamical system $(X\times\tilde\O,\Sigma_X\otimes\Sigma_{\tilde{\O}},\mu\times\nu,G)$ is a multiordered system via the projection $\varphi$ on the second coordinate. Moreover, since $\Sigma_{\tilde{\O}}$ and $\Sigma_X$ are independent, we have
		\begin{equation*}
			h(\mu\times\nu,G|\Sigma_{\tilde\O})=h(\mu,G)>0.
		\end{equation*}
		
		Thus, we may apply the version of Theorem~\ref{main_thm} for totally bounded metric spaces (see Remark~\ref{main_rem}) to the multiordered system $(X\times \tilde{\O},\Sigma_X\otimes\Sigma_{\tilde{\O}},\mu\times\nu,G,\varphi)$ and obtain that the set $A_{\varphi}$ of points which belong to $\varphi$-asymptotic pairs in $X\times \tilde{\O}$ has positive measure $\mu\times\nu$. Note that a pair of distinct points $(x,\prec)$ and $(x',\prec')$ in $X\times\tilde{\O}$ is $\varphi$-asymptotic if $\prec\,=\,\prec'$, and $x\neq x'$ is a $\prec$-asymptotic pair in $(X,G)$.
		
		By the Measurable Projection Theorem, the projection $\varphi(A_{\varphi})$ (which equals to the set of the orders $\prec\,\in\tilde{\O}$ for which there exists a $\prec$-asymptotic pair in $(X,G)$) is measurable with respect to $\nu$ as an analytic set in a Polish space, and clearly has positive measure $\nu$. Furthermore, the set $\varphi(A_{\varphi})$ is $G$-invariant. Indeed, let $\prec\,\in\varphi(A_{\varphi})$ and $g\in G$ be arbitrary. Let $j\in\Z$ be such that $j^{\prec}=g$. Let $(x,x')$ be a $\prec$-asymptotic pair in $X$. Then, by \eqref{16}, for any $k\in\Z$, we have
		\begin{equation*}
			k^{g(\prec)}(g(x))=((k+j)^{\prec}\cdot g^{-1}\cdot g)(x)=(k+j)^{\prec}(x)
		\end{equation*}
		and analogously for $x'$. Hence,
		\begin{equation*}
			\lim\limits_{k\to+\infty}d_X(k^{g(\prec)}(g(x)),k^{g(\prec)}(g(x')))=\lim\limits_{k\to+\infty}d_X((k+j)^{\prec}(x),(k+j)^{\prec}(x'))=0.
		\end{equation*}
		Thus, $(g(x),g(x'))$ is a $g(\prec)$-asymptotic pair, hence $g(\prec)\in \varphi(A_{\varphi})$. Since $\nu(\varphi(A_{\varphi}))>0$, by ergodicity of the measure $\nu$, we have ${\nu(\varphi(A_{\varphi}))=1}$. This ends the proof.
	\end{proof}
	\section{Principal extensions and tiling-based multiorders}\label{s4}
	The aim of this section is to familiarize the reader with the notions of principal extensions, array systems, tiling systems and multiorders arising from ordered tiling systems.
	\subsection{Principal extensions and array systems}
	It is known that every topological dynamical system $(X,G)$ has a zero-dimensional extension $(\hat{X},G)$ which is \emph{principal}, i.e., for every Borel probability $G$-invariant measure $\nu$ on $\hat{X}$ we have $h(\nu,G|\Sigma_X)=0$, where $\Sigma_X$ is the Borel sigma-algebra on $X$ (see e.q. \cite[Theorem~3.2]{H}). If $\htop(X,G)=0$, then, by the variational principle for amenable group actions, we have $\htop(\hat{X},G)=0$. The zero-dimensional system $(\hat{X},G)$ can be represented as a \emph{binary array system}, that is, every point $x\in X$ can be viewed as an array of symbols $(x_{n,g})_{n\in\N,g\in G}$, where $x_{n,g}\in \{0,1\}$ for each $n\in\N$ and $g\in G$. In other words $(\hat{X},G)$ is represented as a subsystem of $\bigl(\prod_{n\in\N}\{0,1\}^G,G\bigr)$ on which $G$ acts by the shifts, i.e., $(h(x))_{n,g}=x_{n,gh}$ for all $n\in\N$ and $g,h\in G$.
	
	Fix some $n\in\N$ and let $F$ be a finite subset of $G$. By a \emph{block with $n$ floors and domain} $F$ we mean any finite array $B=(b_{i,g})_{i\in[1,n],g\in F}\in\prod_{i=1}^n\{0,1\}^{F}$. For two blocks $B$ and $B'$ we  will write $B\approx B'$ if they have the same number $n$ of floors and there exists $h\in G$ such that if $F$ denotes the domain of $B$, then $Fh$ is the domain of $B'$ and $b_{i,g}=b'_{i,gh}$ for each $i\in[1,n]$ and $g\in F$.
	We say that a block $B$ with domain $F$ and $n$ floors \emph{appears} in $\hat X$, if there exists $x\in\hat X$ such that for every $i\in[1,n]$ and $g\in F$ we have $x_{i,g}=b_{i,g}$. In this situation we will write $x|_{[1,n]\times F}=B$. The collection of all blocks with $n$ floors and domain $F$, which appear in $X$, will be denoted by $\B_n(F)$. Observe that since $\hat X$ is shift-\inv, any block $B$ appears in $\hat X$ together with all blocks $B'$ such that $B'\approx B$.
	\subsection{Tilings and tiling systems}
	Throughout this subsection we omit the proofs of the cited theorems and lemmas as they can be found in \cite{DHZ} and appendices of \cite{DOWZ}. We continue to denote by $G$ a countable amenable group.
	\begin{defn}
		A \emph{tiling} of $G$ is a partition $\T$ of $G$ into countably many finite sets $T$ (called \emph{tiles}), i.e., we have
		\begin{equation*}
			G=\bigsqcup_{T\in\T}T \text{\ \ (disjoint union)}.
		\end{equation*}
	\end{defn}
	\begin{defn}\label{proper_tiling}
		A tiling $\T$ is called \emph{proper} if there exists a finite collection $\S$ of finite subsets of $G$ (not necessarily different), called \emph{shapes}, such that each shape contains the unit $e$ of $G$ and for every $T\in\T$ there exists a shape $S\in\S$ satisfying $T=Sc$ for some $c\in T$.
	\end{defn}
	Although a proper tiling admits many choices of collections of shapes, we will always fix one collection $\S$ and one representation $\T\ni T\mapsto (S,c)$, where $T=Sc$. The objects $S$ and $c$ will be called the \emph{shape} and the \emph{center} of $T$, respectively. 
	In what follows, we will be dealing only with proper tilings, hence we will skip the adjective ``proper''. 
	
	Let $\T$ be a tiling with the collection of shapes $\S$. To every $S\in \S$ we assign bijectively the symbol $``S"$. Let $V=\{``S": S\in \S\}\cup\{``0"\}$. The tiling $\T$ can be represented as a symbolic element denoted by the same letter $\T\in V^G$, defined as follows:
	\begin{equation*}
		\T(g) = \begin{cases}
			``\!S", &\text{if } \text{ $g$ is the center of some tile of shape $S$},\\ 
			``0", & \text{otherwise ($g$ is not a center of any tile)}.
		\end{cases}
	\end{equation*}
	\begin{defn}
		Let $V$ be an alphabet defined as above for some finite collection $\S$ of finite sets. Let $\TT\subset V^G$ be a subshift such that every element $\T\in\TT$ represents a tiling with the collection of shapes contained in $\S$. We will call $\TT$ a \emph{dynamical tiling} and $\S$ the \emph{collection of shapes} of $\TT$.
	\end{defn}
	\begin{defn}
		Let $(\TT_k)_{k\in\N}$ be a sequence of dynamical tilings. By a \emph{system of tilings} we mean any topological joining\footnote{By
			a \emph{topological joining} of a sequence of dynamical systems $(X_k,G)$, $k\in\N$, denoted by $\bigvee_{k\in\N}X_k$, we mean any closed subset of the Cartesian product $\prod_{k\in\N}X_k$, which has full projections onto the coordinates $X_k$, $k\in\N$, and is invariant under the product action given by $g(x_1,x_2,\dots)=(g(x_1),g(x_2),\dots)$. We remark that the symbol $\bigvee_{k\in\N}X_k$ refers to many possible topological joinings.} $\TTT=\bigvee_{k\in\N}\TT_k$ (the elements $\bT\in\TTT$ have the form of sequences of tilings $\bT=(\T_k)_{k\in\N}$, where $\T_k\in\TT_k$ for every $k\in\N$). 
	\end{defn}
	By convention, we let $\mathsf T_0$ be the trivial tiling, whose only shape is the singleton $\{e\}$. This dynamical tiling has only one element, hence it represents the trivial one-point system. We will attach $\mathsf T_0$ as the zero entry to any system of tilings.
	\begin{defn}
		Let $\TTT=\bigvee_{k\ge0}\TT_k$ be a system of tilings and let $\S_k$ denote the collection of shapes of $\TT_k$. The system of tilings $\TTT$ is:
		\begin{enumerate}[a)]
			\item \emph{F{\o}lner}, if the union of the collections of shapes $\bigcup_{k\ge 0}\S_k$, arranged in a sequence, is a F{\o}lner sequence in $G$.
			\item \emph{congruent}, if for every $\bT=(\T_k)_{k\ge0}\in\TT$ and every $k\in\N$, each tile $T\in\T_k$ is a union of some tiles of $\T_{k-1}$;
			\item \emph{deterministic}, if it is congruent and for every $k\in\N$ and every $S\in\S_k$, there exist sets $C_{S'}(S)\subset S$ indexed by $S'\in\S_{k-1}$, which satisfy
			\begin{equation*}
				S=\bigsqcup_{S'\in\S_{k-1}}\bigsqcup_{c'\in C_{S'}(S)}S'c'
			\end{equation*}
			and for every $\bT=(\T_k)_{k\ge 0}\in \TTT$, whenever $T=Sc$ is a tile of $\T_{k}$, then
			\begin{equation*}
				T=Sc=\bigsqcup_{S'\in\S_{k-1}}\bigsqcup_{c'\in C_{S'}(S)}S'c'c
			\end{equation*}
			is the partition of $T$ by the tiles of $\T_{k-1}$.
		\end{enumerate}
	\end{defn}
	
	\begin{rem}
		If a system of tilings $\TTT$ is deterministic, then for each ${\bT=(\T_k)_{k\ge 0}\in\TTT}$, every tiling $\T_{k}$ \emph{determines} all the tilings $\T_{k'}$ with $k'<k$, and the assignment $\T_{k}\mapsto \T_{k'}$ is a topological factor map from $\TT_{k}$ onto $\TT_{k'}$. Therefore, the joining $\TTT$ is in fact the inverse limit
		\begin{equation*}
			\TTT=\overset\longleftarrow{\lim_{k\to\infty}}\mathsf{T}_k.
		\end{equation*}
	\end{rem}
	\begin{defn}\cite[Definition~A.8]{DOWZ}
		Any F{\o}lner and deterministic system of tilings $\TTT$ will be called simply a \emph{tiling system} (to distinguish it from less organized systems of tilings).
	\end{defn}
	\begin{thm}(\cite[Theorem~5.2]{DHZ}) Every countable amenable group $G$ admits a tiling system with topological entropy zero.
	\end{thm}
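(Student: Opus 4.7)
The plan is to build the tiling system $\TTT=\bigvee_{k\ge 0}\TT_k$ inductively, producing shape collections $\S_k$ that form a F{\o}lner sequence while keeping the topological complexity of each $\TT_k$ strictly controlled.

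First I would fix a F{\o}lner sequence $(F_k)_{k\ge 1}$ in $G$ exhausting $G$ and invariant enough for any prescribed $(K,\varepsilon)$. Setting $\TT_0$ to be the trivial tiling with the single shape $\{e\}$, I would build $\TT_k$ from $\TT_{k-1}$ as follows. By the Ornstein--Weiss quasi-tiling theorem, finitely many translates of near-F{\o}lner sets can $\varepsilon$-quasi-tile $G$; the core step is the Downarowicz--Huczek--Zhang refinement that upgrades these quasi-tilings into \emph{exact} tilings without sacrificing the F{\o}lner condition. I would apply this refinement at stage $k$ but with the crucial constraint that the new tiles are assembled as unions of tiles already chosen in $\TT_{k-1}$. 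This ``bottom-up'' assembly automatically produces a congruent system, and by selecting a fixed recipe that says, for each new shape $S\in\S_k$, which prior shapes $S'\in\S_{k-1}$ appear and at which centers $C_{S'}(S)\subset S$, one secures the determinism clause of the definition of a tiling system.

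Next I would enforce entropy zero. Since $\TTT$ is the inverse limit of the $\TT_k$, it suffices to ensure $\htop(\TT_k,G)=0$ for each $k$. The idea is to constrain each $\TT_k$ so that the number of distinct $(\S_k)$-patterns appearing inside any F{\o}lner set $F$ grows subexponentially in $|F|$. Because every $\T_k\in\TT_k$ is a partition of $G$ into translates of finitely many shapes from $\S_k$, a pattern on $F$ is essentially determined by where the centers of the tiles meeting $F$ sit, up to boundary effects whose total size is $o(|F|)$ by the F{\o}lner property of $\S_k$. By choosing the quasi-tiling/refinement procedure at each stage to use only a \emph{bounded} number of shapes with sufficiently rigid placement rules (for example, by imposing that centers lie in a periodic-like skeleton dictated by $\T_{k-1}$), the count of admissible patterns on $F$ grows only as $e^{o(|F|)}$, giving $\htop(\TT_k,G)=0$. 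Passing to the inverse limit preserves zero entropy by the variational principle plus Theorem of principal-extension type, since each factor map $\TT_k\to\TT_{k-1}$ can be arranged to be principal through the deterministic structure.

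The main obstacle is the exactness upgrade in Step~2: converting Ornstein--Weiss quasi-tilings into genuine partitions while simultaneously respecting congruence with the already-constructed $\TT_{k-1}$ and keeping the shape collection finite. Handling this requires the DHZ adjustment where overlaps and holes in the quasi-tiling are absorbed by adding boundary cells of $\TT_{k-1}$-tiles into neighbouring super-tiles; one must verify that the resulting enlarged shapes remain F{\o}lner (which follows from the smallness parameters being chosen in a rapidly decreasing manner) and that the shape inventory stays finite (which follows because only finitely many boundary-adjustment patterns can occur on the finite set of $\S_{k-1}$-shapes). Once this is done, the three conditions -- F{\o}lner, congruent, deterministic -- are simultaneously satisfied, and the entropy bookkeeping from the previous paragraph yields a tiling system of topological entropy zero, as required.
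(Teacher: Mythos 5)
This statement is not proved in the paper at all: it is imported verbatim from \cite[Theorem~5.2]{DHZ}, and the subsection in which it appears explicitly says that proofs of the cited results are omitted. So there is no in-paper argument to compare against, and your proposal has to stand on its own as a reconstruction of the DHZ proof. Read that way, it is a correct roadmap of the right strategy (Ornstein--Weiss quasi-tilings, exactification, hierarchical bottom-up assembly to force congruence and determinism, pattern counting for entropy), but it is not a proof, because its load-bearing step is circular: you write that ``the core step is the Downarowicz--Huczek--Zhang refinement that upgrades these quasi-tilings into exact tilings,'' i.e., you invoke as a black box precisely the construction whose existence the theorem asserts. The exactification must be carried out \emph{equivariantly}, so that the output is a closed, shift-invariant family of tilings with a \emph{finite} shape inventory at each level, compatible with the level already built below it; doing this for a general countable amenable group (which has no lattice, no finite-index subgroups, no ``periodic-like skeleton'') is the hard combinatorial core of \cite{DHZ}, and your sketch only names it.

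The second concrete gap is the entropy claim. Since the deterministic system $\TTT$ is the inverse limit of the $\TT_k$, one has $\htop(\TTT)=\lim_k\htop(\TT_k)$, and because $\TT_{k-1}$ is a factor of $\TT_k$ this sequence is non-decreasing; hence you need \emph{every} level to have topological entropy exactly zero, not merely small. This does not come for free from ``using only a bounded number of shapes'': the set of \emph{all} tilings of $G$ by a fixed finite F{\o}lner collection of shapes typically has positive entropy (already for $G=\Z$ and the two interval shapes of lengths $n$ and $n+1$ the entropy is of order $(\log 2)/n>0$), so each $\TT_k$ must be a carefully chosen proper subsystem, and arranging simultaneously that it is nonempty, closed, invariant, congruent with $\TT_{k-1}$, deterministic, and of subexponential pattern growth is exactly the quantitative bookkeeping done in \cite{DHZ}. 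Your appeal to ``the variational principle plus a principal-extension type theorem'' for the inverse limit is also misplaced: no measure-theoretic input is needed there, only the monotone convergence of $\htop(\TT_k)$, which in turn throws the entire burden back onto the unproved claim $\htop(\TT_k)=0$ for each $k$.
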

	\subsection{Tiling-based multiorders}
	Every tiling system can be equipped with a partial order, as described below. 
	For every $k\in\N$ and every $S\in\S_k$, let $C(S)=\bigsqcup_{S'\in\S_{k-1}}C_{S'}(S)$ be the set of all centers of the subtiles of $S$. In $C(S)$ we fix some ordering:
	\begin{equation*}
		C(S)=\{c_1^S,c_2^S,...,c_{l(S)}^S\},
	\end{equation*}
	where $l(S)=|C(S)|$. 
	Instead of saying ``order of centers of subtiles'' we will simply say ``order of subtiles''.
	\begin{defn}
		By an \emph{ordered tiling system} we mean a tiling system $\TTT$ with the ordering of subtiles established for each shape $S\in\bigcup_{k\in\N}\S_k.$
	\end{defn}
	Observe that the order of subtiles of shapes $S\in\S_k$ induces also an order of subtiles of any tile $T$ from any 
	$\T_k\in\TT_k$. Indeed, since every such tile $T$ has the form $T=Sc$, where $S\in\S_k$, then its subtiles are naturally ordered as follows:
	\begin{equation*}
		T=\bigsqcup_{i=1}^{l(S)}S'_ic_i^Sc,
	\end{equation*}
	where $S'_i$ is the unique $S'\in\S_{k-1}$ such that $c_i^S\in C_{S'}(S)$.
	By a straightforward induction, for each $\bT=(\T_k)_{k\ge 0}\in\TTT$, $k\in\N$ and $T\in\T_k$, the above ordering induces a linear order $\prec_T$ of the elements of $T$ in such a way that whenever $T'$ is a subtile of $T$ then $\prec_T$ agrees with $\prec_{T'}$ on $T'$. The order $\prec_T$ can be identified with a bijection between $[1,|T|]\subset\N$ and $T$, which we will write as follows:
	$$
	i\mapsto i^{\prec_T}, \ \ (i\in[1,|T|], \ i^{\prec_T}\in T).
	$$
	For $1\le i\le j\le |T|$ the symbol $[i,j]^{\prec_T}$ will stand for $\{i^{\prec_T}, 
	(i+1)^{\prec_T},\dots,j^{\prec_T}\}$. The set $[i,j]^{\prec_T}$ will be called an \emph{order interval within $T$}. The collection of all intervals within $T$ (of any length between $1$ and $|T|$) will be denoted by $\I^{T}$.

	\begin{defn}
		Let $\TTT$ be a tiling system and let $\bT=(\T_k)_{k\ge 0}\in\TTT$. We say that $\bT$ is \emph{in general position} if
		\begin{equation*}
			\bigcup_{k\in\N}T_k^e=G,
		\end{equation*}
		where $T_k^e$ denotes the central tile of $\T_k$, i.e., the unique tile $T\in\T_k$ such that $e\in T$.
	\end{defn}
	Observe that in any congruent system of tilings the central tiles always form an increasing sequence, so the above 
	union is always increasing.
	
	Now let $\TTT$ be an ordered tiling system and let $\bT=(\T_k)_{k\ge 0}\in\TTT$ be in general position. Then $\bT$ determines a linear order $\prec_{\bT}$ on $G$ as follows: Let $a,b\in G$ be distinct. There exists $k\ge 1$ such that both $a,b$ belong to a common tile  $T$ of $\T_k$ (indeed, eventually $a,b$ belong to a common central tile). Then $a\prec_{\bT}b$ if and only if $a\prec_T b$. Note that this definition is correct (i.e., does not depend on the index $k$ of the tiling $\T_k$ in which the ``common'' tile $T$ is found).
	
	One may verify, that for any $a,b\in G$ there are at most finitely many elements $g\in G$ such that $a\prec_{\bT} g \prec_{\bT} b$. Hence, for every $\T\in \TTT$ which is in general position, $(G,\prec_{\bT})$ is order-isomorphic to either $(\Z,<)$ or $(\N,<)$, or $(-\N,<)$ (the two latter cases occur only if the central tiles $T_{k-1}^e$ have, from some $k$ onward, the smallest or respectively the largest index among the subtiles of $T_k^e$).
	\begin{defn} Let $\TTT$ be an ordered tiling system. Any element $\bT\in\TTT$ for which $(G,\prec_{\bT})$ is order-isomorphic to $(\Z,<)$ (in other words, $\prec_{\bT}$ is an order of type $\Z$), is called \emph{straight}. Moreover, we denote:
		\begin{equation*}
			\TTT_{\mathsf{STR}}=\{\bT\in\TTT: \bT \text{ is straight}\}.
		\end{equation*}
	\end{defn}
	
	The following lemma shows that the set $\TTT_{\mathsf{STR}}$ is large in both topological and measure-theoretic sense.
	\begin{lem}(\cite[Lemma~B.4]{DOWZ})\label{4.14}
		Let $\TTT$ be an ordered tiling system. Then, the set $\TTT_{\mathsf{STR}}$ is invariant, residual and has measure $1$ for every Borel probability $G$-invariant measure on $\TTT$.
	\end{lem}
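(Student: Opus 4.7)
The plan is to dispatch the three claims—$G$-invariance, residuality, and full measure—by first writing the complement $\TTT \setminus \TTT_{\mathsf{STR}}$ as an explicit countable union of closed ``bad'' sets, and then showing each of them is both nowhere dense and null for every $G$-invariant Borel probability measure on $\TTT$. Concretely, I would decompose
\[
\TTT \setminus \TTT_{\mathsf{STR}} \;=\; \Bigl(\bigcup_{g \in G \setminus \{e\}} B_g\Bigr) \cup B_+ \cup B_-,
\]
where $B_g = \{\bT : g \notin T_k^e \text{ for every } k\}$ captures failure of general position, while $B_+$ (resp.\ $B_-$) consists of those $\bT$ for which $T_{k-1}^e$ is eventually always the smallest (resp.\ largest) subtile of $T_k^e$, accounting for orders of type $\N$ or $-\N$. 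Because $T_k^e$ is determined by $\T_k$ on a finite window of $G$, each event $\{T_k^e = T\}$ is clopen, so all three kinds of bad sets are closed. $G$-invariance of $\TTT_{\mathsf{STR}}$ is then essentially a bookkeeping check: for $\bT \in \TTT_{\mathsf{STR}}$ and $g \in G$, the central tile of $g(\bT)$ at level $k$ is a right-translate of the tile of $\T_k$ containing $g^{-1}$, and the intrinsic ordering of subtiles pulls back compatibly, so $\prec_{g(\bT)}$ is the image of $\prec_{\bT}$ under \eqref{ac} and the type-$\Z$ property is preserved.

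For residuality, I would verify emptiness of interior of each bad set by exhibiting, inside any nonempty cylinder of $\TTT$, an element that escapes. For $B_g$, the F{\o}lner and congruent structure of $\TTT$ allows prescribing subtile choices at some high level $k$ so that the central tile $T_k^e$ grows large enough to contain $g$, producing an element of the cylinder outside $B_g$. For $B_+$ (and symmetrically $B_-$), as soon as one reaches a level whose shape has more than one subtile—which occurs infinitely often, again by F{\o}lner—one can freely prescribe the center so that $T_{k-1}^e$ is not the first subtile of $T_k^e$, yielding an element of the cylinder outside $B_+$. Thus all three types of bad sets are nowhere dense, and their countable union is meager; hence $\TTT_{\mathsf{STR}}$ is residual.

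For full measure, I would fix a $G$-invariant Borel probability measure $\mu$ and pass to the ergodic decomposition. Each bad set is $G$-invariant, so for ergodic $\mu$ it has measure $0$ or $1$, and it suffices to rule out measure $1$. For $B_g$, the $\mu$-probability of $\{e \text{ and } g \text{ lie in a common tile of } \T_k\}$ can be re-expressed, using $G$-invariance, as an average over the shape containing $e$ at level $k$, and this average tends to $1$ as $k \to \infty$ by the F{\o}lner property of the shapes $\S_k$. For $B_\pm$, the position of $T_{k-1}^e$ within $T_k^e$ is, under $G$-invariance, uniformly distributed over the $l(S)$ possible positions given the level-$k$ shape $S$; since infinitely many shapes satisfy $l(S) > 1$, a Borel--Cantelli style argument excludes the tail event of eternal extremality. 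The step I expect to be the main obstacle is the measure estimate for $B_g$: it requires a clean correspondence between the $\mu$-mass of $\{g \in T_k^e\}$ and the F{\o}lner ratios $|S_k \triangle g S_k|/|S_k|$ for shapes $S_k \in \S_k$, and it is here that the F{\o}lner structure of the tiling system has to be leveraged carefully, rather than being just a formal ingredient as in the topological part.
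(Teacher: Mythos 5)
First, a caveat about the ground truth: the paper does not actually prove this lemma. It is quoted from \cite[Lemma~B.4]{DOWZ}, and the surrounding text states explicitly that proofs in this subsection are omitted and are to be found in \cite{DHZ} and the appendices of \cite{DOWZ}. So there is no in-paper proof to compare your argument against; I can only assess it on its own terms.

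Your decomposition of the complement into the sets $B_g$ (failure of general position) and $B_\pm$ (eventual extremality of $T_{k-1}^e$ among the subtiles of $T_k^e$, which is exactly what produces orders of type $\N$ or $-\N$) is the right bookkeeping, and the measure-theoretic half is essentially sound: averaging the indicator of $\{e \text{ and } g \text{ lie in a common tile of }\T_k\}$ over a tile $Sc$ gives the ratio $|S\cap g^{-1}S|/|S|$, which tends to $1$ uniformly by the F{\o}lner property of the shapes, so $\mu(B_g)=0$; and the equidistribution of the position of $e$ inside the central tile disposes of $B_\pm$. Three small inaccuracies: the distribution over subtile indices is size-biased rather than uniform over the $l(S)$ positions (the conclusion survives, since $\max_{S\in\S_{k_0}}|S|/\min_{S\in\S_K}|S|\to 0$); the individual sets $B_g$ are \emph{not} $G$-invariant (only their union over $g\neq e$ is), so the ergodic $0$--$1$ reduction you invoke is unavailable for them, though the direct estimate makes it unnecessary; and $B_\pm$ are $F_\sigma$ rather than closed, which is harmless since each closed piece $\bigcap_{k\ge k_0}\{\cdots\}$ can be treated separately.

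The genuine gap is in the residuality argument. To show a bad set is nowhere dense you propose to take an arbitrary nonempty cylinder and ``prescribe subtile choices at some high level'' or ``freely prescribe the center'' so as to escape it. But an ordered tiling system is, by definition, just a closed shift-invariant F{\o}lner and deterministic joining $\bigvee_{k}\TT_k$: the deterministic condition says a tile determines its subtiles, and nothing in the definition grants any freedom in the opposite direction, i.e.\ that a given finite configuration at level $K$ admits more than one compatible aggregation into level-$(K+1)$ tiles, let alone one in which the central tile is recentered at will. This local-perturbation step is valid in a full shift, or in tiling systems built with explicitly free choices of centers as in \cite{DHZ}, but it is not licensed by the axioms you are working from, and it is the entire content of the density claim. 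Note also that full measure for every invariant measure does not by itself imply density, so the topological statement cannot be recovered from your (correct) measure estimates. You would need either to import from \cite{DOWZ} or \cite{DHZ} the structural property of tiling systems that supplies this flexibility, or to find a different route to residuality.
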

	\begin{defn}
		Let $\TTT$ be an ordered tiling system. The \emph{tiling-based multiorder} is the set $$\tilde{\O}_{\TTT}=\{\prec_{\bT}\,: \bT\in\TTT_{\mathsf{STR}}\}.$$
	\end{defn}
	\begin{rem} The set $\tilde{\O}_{\TTT}\subset \tilde{\O}$ is invariant under the action of $G$ given by~\eqref{ac} and the assignment $\TTT_{\mathsf{STR}}\ni\bT\mapsto\,\prec_{\bT}\,\in\tilde{\O}_{\TTT}$ intertwines the shift action of $G$ on $\TTT_{\mathsf{STR}}$ with the action of $G$ on $\tilde{\O}_{\TTT}$. Moreover, since, by Lemma~\ref{4.14}, $\TTT_{\mathsf{STR}}$ supports all invariant measures, for every invariant measure $\mu$ on $\TTT$ this assignment is a measure-theoretic factor map from $(\TTT,\mu,G)$ to the system $(\tilde{\O}_{\TTT},\nu,G)$ (where $\nu$ is the image of $\mu$), which is a multiorder in the sense of Definition~\ref{def_multiorder}.
	\end{rem}
	\begin{lem}\label{lem_invariant}
		Let $\TTT$ be an ordered tiling system. For any finite subset $K\subset G$ and any $\eps>0$ there exists $l_0$ such that for any $\bT=(\T_k)_{k\ge 0}\in\TTT$, any $k\in\N$ and any tile $T\in\T_k$ with $|T|\ge l_0$, every order interval $I\in \I^T$ of length at least $l_0$ is $(K,\varepsilon)$-invariant.
	\end{lem}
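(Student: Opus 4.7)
The plan is to combine the F{\o}lner property of the shape collections $\S_k$ with the congruent and deterministic nesting of tiles in $\TTT$. Given $K$ and $\eps$, the F{\o}lner property guarantees that only finitely many shapes in $\bigcup_{k\ge 0}\S_k$ fail to be $(K,\eps/4)$-invariant; since each $\S_k$ is finite, there exists $k_0$ such that every $S\in\S_{k'}$ with $k'>k_0$ is $(K,\eps/4)$-invariant. Setting $M=\max\{|S|:S\in\bigcup_{k'\le k_0}\S_{k'}\}<\infty$, I choose $l_0$ large enough that $l_0>M$ and $4(|K|+1)M<(\eps/2)\,l_0$. The strategy is then to show that any order interval $I\in\I^T$ with $|I|\ge l_0$ consists, apart from two boundary remnants of total size at most $2M$, of complete level-$k_0$ subtiles, each of which is $(K,\eps/4)$-invariant.

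To execute this, fix $\bT=(\T_k)_{k\ge 0}\in\TTT$, $k\in\N$, $T\in\T_k$ with $|T|\ge l_0$, and $I\in\I^T$ with $|I|\ge l_0$. Since $l_0>M$ and every tile at level at most $k_0$ has cardinality at most $M$, necessarily $k>k_0$. Iterating congruence from level $k$ down to level $k_0$ decomposes $T$ into a disjoint union $T=T_1\sqcup\ldots\sqcup T_n$ of tiles of $\T_{k_0}$, and determinism forces $\prec_T$ to traverse $T$ by enumerating $T_1$ (in the order $\prec_{T_1}$), then $T_2$, and so on. Since $|I|>M\ge|T_m|$ for every $m$, $I$ meets at least two of the $T_m$, so I may write
\begin{equation*}
I=(I\cap T_a)\sqcup T_{a+1}\sqcup\ldots\sqcup T_{b-1}\sqcup(I\cap T_b)
\end{equation*}
with $a<b$. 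Setting $I_{\mathrm{full}}=T_{a+1}\sqcup\ldots\sqcup T_{b-1}$ and $I_{\mathrm{part}}=(I\cap T_a)\cup(I\cap T_b)$, we have $|I_{\mathrm{part}}|\le 2M$. Using the $(K,\eps/4)$-invariance of each $T_m$ together with the inclusions $KI\setminus I\subseteq(KI_{\mathrm{full}}\setminus I_{\mathrm{full}})\cup KI_{\mathrm{part}}$ and $I\setminus KI\subseteq(I_{\mathrm{full}}\setminus KI_{\mathrm{full}})\cup I_{\mathrm{part}}$ yields $|KI\setminus I|\le(\eps/4)|I|+2|K|M$ and $|I\setminus KI|\le(\eps/4)|I|+2M$. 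Adding these estimates and invoking the choice of $l_0$ gives $|KI\triangle I|<\eps|I|$, as required.

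The main obstacle, and the conceptual heart of the argument, is the structural reduction: the combination of congruence and determinism has to be exploited to produce a canonical decomposition of any tile above level $k_0$ into level-$k_0$ subtiles that is respected by $\prec_T$, which in turn lets an arbitrary order interval $I$ be written as a union of complete level-$k_0$ subtiles (the bulk) plus two partial pieces of size at most $M$ each (the ends). Without this structural input there is no way to control $|KI\triangle I|$ through the invariance of the shapes alone; once it is in hand, the remaining estimate is routine, the bulk inheriting the $(K,\eps/4)$-invariance of its constituent shapes and the end pieces contributing only an additive error bounded in $M$, negligible relative to $|I|\ge l_0$.
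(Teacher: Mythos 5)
Your argument is correct in substance, but it takes a genuinely different---and much more self-contained---route than the paper. The paper disposes of the lemma in two lines: by Lemma~\ref{4.14} the set $\TTT_{\mathsf{STR}}$ is dense in $\TTT$, so every order interval $I\in\I^T$ occurring in the statement is an order interval of $\prec_{\bT}$ for some straight $\bT$, and then the uniform F{\o}lner property of tiling-based multiorders (\cite[Theorem~B.3]{DOWZ}) supplies the required $l_0$ directly. You instead reprove the combinatorial core from scratch: the F{\o}lner property of the shape collection gives a level $k_0$ above which all shapes are $(K,\eps/4)$-invariant, the concatenation structure of $\prec_T$ lets you write any sufficiently long order interval as a string of complete level-$k_0$ subtiles flanked by two remnants of size at most $M$, and a routine counting estimate finishes. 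This is essentially the content of the cited Theorem~B.3, so your proof buys independence from the appendix of \cite{DOWZ} at the cost of redoing its work, while the paper's proof buys brevity at the cost of two external references. Your key structural claim---that $\prec_T$ enumerates the level-$k_0$ subtiles consecutively, so that each of them is itself an order interval within $T$---is justified: it follows by induction from the definition of $\prec_T$ as the concatenation of the orders of the subtiles, and without it the reduction would indeed fail.

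One small slip: you choose $k_0$ so that every shape at level $k'>k_0$ is $(K,\eps/4)$-invariant, but you then decompose $I$ into level-$k_0$ subtiles and invoke \emph{their} $(K,\eps/4)$-invariance, which you have not secured. Replace ``$k'>k_0$'' by ``$k'\ge k_0$'' (keeping $M$ as the maximum of $|S|$ over levels $\le k_0$, so that $|T|\ge l_0>M$ still forces $k>k_0$) and the argument closes.
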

	\begin{proof}The claim follows from the two facts:
		\begin{enumerate}
			\item $\TTT_{\mathsf{STR}}$ is dense in $\TTT$ (see Lemma~\ref{4.14}), hence any order interval $I\in \I^T$ as in the formulation of the lemma appears as an order interval for some order $\prec_{\bT}$, where $\bT\in\TTT_{\mathsf{STR}}$.
			\item There exists $l_0$ such that for any $\bT\in\TTT_{\mathsf{STR}}$, any $\prec_{\bT}$-order interval of length at least $l_0$ is $(K,\varepsilon)$-invariant (see \cite[Theorem~B.3]{DOWZ}).
		\end{enumerate}
		\vspace{-12pt}
	\end{proof}
	\begin{cor}\label{tocor}
		Let $\TTT=\bigvee_{k\ge 0}\TT_k$ be an ordered tiling system and let $\S_k$ denote the collection of shapes of $\TT_k$.  Let $(X,G)$ be a binary array system of \tl\ entropy zero. Then, for any $\eps>0$ and any $n\in\N$, there exists $l_0$ such that for any $\bT=(\T_k)_{k\ge0}\in\TTT$, any $k\in\N$ and any $T\in\T_k$ with $|T|\ge l_0$, every order interval $I\in \I^T$ of length $l\ge l_0$ satisfies
		\begin{equation}\label{card_nolog}
			\#\B_n(I)<2^{\lfloor l\cdot \eps\rfloor},
		\end{equation}
		where $\B_n(I)$ denotes the collection of all blocks with $n$ floors and domain $I$ appearing in $X$.
	\end{cor}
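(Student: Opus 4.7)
The plan is to combine the standard Ornstein--Weiss-type uniform estimate on block counts over sufficiently invariant Følner sets with the Følner-like invariance of long order intervals provided by Lemma~\ref{lem_invariant}.

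First I would reduce the problem to a bound for a single subshift of entropy zero. Since $(X,G)$ has topological entropy zero and is represented as a binary array system, its projection $X^{(n)}\subset(\{0,1\}^n)^G$ to the first $n$ floors is a subshift over a finite alphabet and, as a topological factor of $(X,G)$, has entropy zero. The block count $\#\B_n(F)$ appearing in the statement of the corollary is precisely the number of patterns of $X^{(n)}$ on the domain $F$, so it suffices to bound this number.

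Next I would invoke the standard entropy theory for amenable groups (the Ornstein--Weiss covering lemma applied to the almost-subadditive set function $F\mapsto\log_2\#\B_n(F)$) to obtain the following: for the given $\varepsilon>0$, there exist a finite $K\subset G$, $\delta>0$, and $M\in\N$ such that every $(K,\delta)$-invariant set $F\subset G$ with $|F|\ge M$ satisfies
\[
\#\B_n(F)<2^{\lfloor|F|\cdot\varepsilon\rfloor}.
\]
I would then apply Lemma~\ref{lem_invariant} to this particular pair $(K,\delta)$ to obtain a threshold $l_0'$ such that for any $\bT=(\T_k)_{k\ge0}\in\TTT$, any $k\in\N$, any tile $T\in\T_k$ with $|T|\ge l_0'$, and any order interval $I\in\I^T$ of length at least $l_0'$, the set $I$ is $(K,\delta)$-invariant. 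Setting $l_0=\max(l_0',M)$, every order interval considered in the corollary is simultaneously $(K,\delta)$-invariant and of cardinality at least $M$, so the desired bound follows.

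The main obstacle is the step passing from ``topological entropy is zero'' (which is the statement that a certain limit along one specific Følner sequence vanishes) to a uniform upper bound over \emph{all} sufficiently invariant finite sets. This is a standard but non-trivial consequence of the Ornstein--Weiss lemma for almost-subadditive invariants on amenable groups, which I would invoke as a citation rather than re-prove; everything else is just bookkeeping against Lemma~\ref{lem_invariant}.
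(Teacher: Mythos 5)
Your proposal is correct and is essentially the paper's argument: both proofs hinge on Lemma~\ref{lem_invariant} to convert long order intervals into sufficiently invariant (F{\o}lner-like) sets, and then on the standard amenable entropy theory to control block counts over such sets. The only difference is presentational --- the paper argues by contradiction, extracting from a putative failure a F{\o}lner sequence of intervals along which complexity grows at rate $\eps$ (contradicting the fact that topological entropy is independent of the F{\o}lner sequence), whereas you run the argument directly by citing the uniform Ornstein--Weiss bound over all $(K,\delta)$-invariant sets; these two formulations are interchangeable here.
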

	\begin{proof}
		Suppose this is not true. Then there exists an $\eps>0$ and $n\in\N$, an increasing \sq\ of natural numbers $l_m$, a \sq\ of tiles $T_m$ appearing in $\TTT$ (i.e., ${T_m\in\T_{k_m}}$ for some $\bT=(\T_k)_{k\ge0}\in\TTT$ and $k_m\ge1$) with $|T_m|\ge l_m$ and a sequence of order intervals $I_m\in\I^{T_m}$ of lengths $l_m$ such that
		$$
		\#\B(I_m)\ge2^{\lfloor l_m\cdot \eps\rfloor}.
		$$
		By Lemma \ref{lem_invariant}, the \sq\ $(I_m)_{m\in\N}$ is a F\o lner \sq\ in $G$. Then the above inequality implies that $h_{\mathsf{top}}(X)\ge\eps$,\footnote{It is well known that \tl\ entropy does not depend on the choice of a F\o lner \sq.} which is a contradiction.
	\end{proof}
	\begin{defn}\label{top_mult_sys}
		Let $\phi:X\to\TTT$ be a \tl\ factor map from a \tl\ $G$-action $(X,G)$ to an ordered tiling system $(\TTT,G)$. Let $\tilde{X}=\phi^{-1}(\TTT_{\mathsf{STR}})$ and let $\varphi: \tilde{X}\to\tilde{\O}_{\TTT}$ be given by $x\mapsto \prec_{\bT}$, where $\bT=\phi(x)$. The quadruple $(X,G,\phi,\varphi)$ will be called a \emph{topologically multiordered dynamical system}. A pair $(x,x')$ of distinct points in $X$ is called $\varphi$-asymptotic if $x,x'\in\tilde{X}$, $\varphi(x)=\varphi(x')$ (but not necessarily $\phi(x)=\phi(x')$) and $(x,x')$ is $\prec\,$-asymptotic, where $\prec\,=\varphi(x)$. 
	\end{defn}
	\begin{rem}
		Observe that if $(X,G,\phi,\varphi)$ is a topologically multiordered dynamical system and $\mu$ is an invariant measure on $X$, then, by Lemma~\ref{4.14}, $(X,\mu,G,\varphi)$ is a multiordered dynamical system in the sense of Definition~\ref{def_multiorder}. Moreover, every $\varphi$-asymptotic pair in the sense of Definition~\ref{top_mult_sys} is $\varphi$-asymptotic in the sense of Definition~\ref{phi-asympt}. In other words, the new definition encompasses a larger class of pairs.
	\end{rem}
	\subsection{Centered and odometric tiling systems}
	In this subsection we introduce two classes of tiling systems with additional convenient properties.
	\begin{defn}
		An ordered tiling system $\TTT=\bigvee_{k\ge 0}\TT_k$ is said to be \emph{centered} if, for any $\bT=(\T_k)_{k\ge0}\in\TTT$, any $k\in\N$ and any $T\in\T_k$, the order $\prec_T$ starts at the center of $T$ (see the text following Definition~\ref{proper_tiling})
	\end{defn}
	\begin{rem}\label{rem_centered}
		Any ordered tiling system $\TTT$ has a centered tiling system as a topological factor. Indeed, the factor map consists of appropriately moving the centers of all tiles. Note also that the factor map sends the straight elements into straight elements and preserves the resulting orders. 
	\end{rem}
	\begin{defn}\label{def_odometr}
		Let $(p_k)_{k\in\N}$ be a strictly increasing sequence of natural numbers such that $p_{k-1}|p_{k}$ for all $k\in\N$. An ordered tiling system $\TTT=\bigvee_{k\ge 0}\TT_k$ with the collections of shapes $\S_k$, $k\ge 0$, is called \emph{odometric} (with base $(p_k)_{k\in\N}$), if for every $\bT=(\T_k)_{k\ge 0}\in\TTT$, every $k\in\N$ and every tile $T\in\T_{k}$, the centers of all subtiles $T'\in\T_{k-1}$ of $T$, satisfy the condition
		\begin{equation}
			{j_{T'}\equiv j_T\!\!\mod{p_{k-1}}},
		\end{equation}
		where $j_{T'}$ and $j_T$ are the positions of the centers of $T'$ and $T$, respectively, counting along~$\prec_T$.
	\end{defn}
	\begin{rem}
		If $\TTT$ is odometric and $\bT=(\T_k)_{k\ge 0}\in\TTT_{\mathsf{STR}}$, then, for every $k\in\N$, the positions of centers of all tiles $T\in\T_k$ are congruent modulo $p_k$, counting along~$\prec_{\bT}$.
	\end{rem}
	\begin{lem}\label{lem_odometric}
		Let $\TTT=\bigvee_{k\ge 0}\TT_k$ be a centered tiling system on $G$ and let $(p_k)_{k\in\N}$ be as in Definition~\ref{def_odometr}. There exists a tiling system $\TTT'$ on $G$, odometric with base $(p_k)_{k\in\N}$, which is a principal extension of $\TTT$. Moreover, for any $\bT\in\TTT_{\mathsf{STR}}$, every $\bT'\in\TTT'$ in the fiber of $\bT$ is straight and we have $\prec_{\bT'}\,=\,\prec_{\bT}$.
	\end{lem}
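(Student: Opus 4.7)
The plan is to construct $\TTT'$ as an extension of $\TTT$ by an odometer component $\Omega = \varprojlim_k \Z/p_k\Z$, using this auxiliary coordinate to regroup consecutive $\TTT$-tiles into clusters whose total sizes are multiples of $p_k$, thereby forcing the odometric condition. Because $\TTT$ is centered, $j_T=1$ for every tile $T$, so the odometric condition at level $k$ amounts to requiring that each shape in $\S'_{k-1}$ have cardinality divisible by $p_{k-1}$, and this is what the grouping procedure below achieves.

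For the construction, I would take the elements of $\TTT'$ to be pairs $(\bT,\omega)$ with $\bT=(\T_k)_{k\in\N}\in\TTT$ and $\omega=(\omega_k)_{k\in\N}\in\Omega$ satisfying $\omega_{k-1}\equiv\omega_k\pmod{p_{k-1}}$. Given such a pair with $\bT\in\TTT_\mathsf{STR}$, we walk along $\prec_\bT$ and greedily concatenate consecutive $\T_k$-tiles into clusters, closing each cluster as soon as its cumulative size becomes divisible by $p_k$; the absolute starting position along $\prec_\bT$ is fixed by $\omega_k$ (for instance, $\omega_k$ specifies, modulo $p_k$, where the cluster containing the identity begins). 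A pigeonhole argument on partial-sum residues modulo $p_k$ shows that every cluster closes within at most $p_k$ consecutive $\T_k$-tiles, yielding a finite shape collection $\S'_k$ whose shapes all have size divisible by $p_k$. The compatibility $\omega_{k-1}\equiv\omega_k\pmod{p_{k-1}}$ ensures that the level-$(k-1)$ clustering refines the level-$k$ clustering, making $\TTT'$ congruent. The $G$-action on $\TTT'$ shifts $\bT$ in the usual way and updates $\omega_k$ by subtracting, modulo $p_k$, the position of the acting element along $\prec_\bT$, so that the extension is $G$-equivariant.

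Verification of the remaining properties is then straightforward. By design, $\TTT'$ is F\o lner (its shapes dominate those of $\TTT$), deterministic and proper from the construction, and odometric with base $(p_k)_{k\in\N}$. The projection $\TTT'\to\TTT$ forgetting $\omega$ is a continuous $G$-equivariant factor map. Principality holds because $\Omega$ is a compact abelian group on which the coupled $G$-action factors through translations, and such actions have zero measure-theoretic entropy; consequently $h(\nu,G|\Sigma_\TTT)=0$ for every $G$-invariant measure $\nu$ on $\TTT'$. Finally, for any $\bT\in\TTT_\mathsf{STR}$ and any $\omega\in\Omega$, the pair $\bT'=(\bT,\omega)$ has tiles that are unions of consecutive (in $\prec_\bT$) tiles of $\TTT$, so $\bT'$ is straight and its induced linear order coincides with $\prec_\bT$, giving $\prec_{\bT'}=\prec_\bT$.

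The main obstacle is the careful coupling between $\TTT$ and $\Omega$ so that $\TTT'$ is genuinely a tiling system and not merely a measurable extension. One must verify that the clustering rule depends continuously on $(\bT,\omega)$, that the shape collections $\S'_k$ are finite with a bound compatible with F\o lner growth, that the coupled $G$-action is continuous on a closed $G$-invariant subset of the appropriate symbolic space, and that the construction extends consistently from the residual set $\TTT_\mathsf{STR}$ to all of $\TTT$. If the bookkeeping becomes delicate, one can first pass to a subsequence of levels of $\TTT$ so that consecutive levels differ by sufficiently large ratios; this both simplifies the labelling and guarantees the vanishing entropy contribution of $\Omega$.
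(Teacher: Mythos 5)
Your construction takes a genuinely different route from the paper: you change the tiles themselves, regrouping consecutive $\T_k$-tiles into clusters of size divisible by $p_k$ with the help of an odometer coordinate, whereas the paper never touches the tiles at all. Unfortunately, your route has a concrete gap at its very first step. The claim that ``a pigeonhole argument on partial-sum residues modulo $p_k$ shows that every cluster closes within at most $p_k$ consecutive $\T_k$-tiles'' is false. Pigeonhole applied to the partial sums $s_1,\dots,s_{p_k}$ only yields two indices $i<j$ with $s_i\equiv s_j$, i.e., some \emph{interior} consecutive block of tiles whose total size is divisible by $p_k$ --- not an \emph{initial} segment. The greedy rule ``close as soon as the cumulative size is divisible by $p_k$'' can therefore fail to terminate: with $p_k=4$ and a run of tiles of sizes $1,2,4,4,4,\dots$ the partial sums are $1,3,3,3,\dots \pmod 4$ and the cluster never closes. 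Since the congruence of the levels (each level-$k$ cluster being a union of level-$(k-1)$ clusters) also hinges on the greedy boundaries being exactly the tile boundaries at distance $\equiv 0 \bmod p_{k-1}$ from the origin, this single failure propagates through the whole construction. A second, unaddressed difficulty is that your clustering is anchored to a global walk along $\prec_{\bT}$, which is only defined for $\bT\in\TTT_{\mathsf{STR}}$; turning the resulting family of decorated pairs $(\bT,\omega)$ into a \emph{compact, deterministic, proper} tiling system over all of $\TTT$ (with finite shape collections and a continuous factor map back to $\TTT$) is precisely the hard part, and the closing remark that ``the construction extends consistently from the residual set'' does not supply it.

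For contrast, the paper avoids both problems by keeping every tile of every $\T_k$ intact and merely \emph{reassigning its center} to one of its first $p_k$ positions along $\prec_T$ (possible after passing to a subsequence so that $p_k\le\min\{|S|:S\in\S_k\}$). The odometric congruences $j_{T'}\equiv j_T\bmod p_{k-1}$ are then purely local constraints, imposable tile by tile on all of $\TTT$ (not just on $\TTT_{\mathsf{STR}}$), each order interval of length $\ge p_{k-1}$ containing a position in every residue class. The entropy added at level $k$ is at most $\log p_k/p_k$, whose tails tend to $0$, so conditional topological entropy over $\TTT$ vanishes and principality follows from the conditional variational principle. If you want to salvage your approach you would need a clustering rule that provably closes (the interior-block version of pigeonhole forces a non-canonical choice, which damages equivariance and continuity) and a genuine compactness argument off $\TTT_{\mathsf{STR}}$; the paper's local recentering sidesteps all of this.
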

	\begin{proof}
		By passing to a subsequence of $(\TT_k)_{k\ge 0}$, we may assume that $p_k\le \min\{|S|: S\in\S_k\}$. The construction of $\TTT'$ goes by induction. In the initial step, for every $k\in\N$ and every $\T_k\in\TT_k$, we move the center of each tile $T\in\T_k$ to one of the initial (w.r.t.\ $\prec_{T}$) $p_k$ positions of $T$. We do so in all $p_k$ possible ways, independently in all tiles. In this manner we create a congruent (but not deterministic) system of tilings $\TTT^{(0)}$, which clearly is a topological extension of $\TTT$. Each shape $S\in\S_k$ is replaced by $p_k$ new shapes, depending on the position of the center. 
		Topological entropy of the $k$-floor of this extension is increased by at most $\delta_k=\tfrac{\log{p_k}}{\min\{|S|: S\in\S_k\}}\le \frac{\log{p_k}}{p_k}$ and, since $p_k$ grows at least exponentially, we have $\sum_{k\in\N}\delta_k<\infty$. In step $n\ge 1$ we create $\TTT^{(n)}$ by keeping only those $(\T^{(0)}_{k})_{k\ge 0}\in\TTT^{(0)}$ for which, for any tile $T\in\T_{n}^{(0)}$, any $k\le n$ and any subtile $T'\in\T_{k}^{(0)}$ of $T$, we have ${j_{T'}\equiv j_T\!\!\mod{p_k}}$, where $j_{T'}$ and $j_T$ are the positions of the centers of $T'$ and $T$, respectively, counting along $\prec_T$. The conditional topological entropy of $\TTT^{(n)}$ given $\TTT$ is bounded by $\sum_{k\ge n}\delta_k$ (an amount which tends to $0$ as $n\to\infty$). The tiling system $\TTT'$ is defined as the intersection over $n$ of the congruent systems of tilings $\TTT^{(n)}$, $n\in\N$ ($\TTT'$ is now deterministic because the center of every tile determines the positions of the centers of all its subtiles). We skip the straightforward verification that $\TTT'$ enjoys the odometric property with base~$(p_k)_{k\in\N}$ and that for every $\bT\in\TTT_{\mathsf{STR}}$ all $\bT'$ in the fiber of $\bT$ are straight and satisfy $\prec_{\bT'}\,=\,\prec_{\bT}$. It is clear that the topological conditional entropy of $\TTT'$ w.r.t.\ $\TTT$ is zero, hence, it follows from the variational principle for conditional entropy (see e.g.\ \cite[Theorem~5.1]{Y}), that $\TTT'$ is a principal extension of $\TTT$.
	\end{proof}
	\section{Extensions with no asymptotic pairs of zero-entropy topological $G$-actions}\label{section_extension}
	This section is devoted to proving Theorem \ref{cor_extension}, which is a generalization of the inclusion $\mathsf{TEZ}\subset\mathsf{FNAP}$ of \cite[Theorem~4.1]{DL}, i.e., of the fact that any \tl\ system of entropy zero is a \tl\ factor of a system with no asymptotic pairs. Together with Theorem~\ref{cor_thm} this gives a full characterization of topological systems of entropy zero as factors of topological systems with no asymptotic pairs. Obviously, in the context of countable amenable groups, the notion of an asymptotic pair has to be replaced by either a $\prec$-asymptotic pair or a $\varphi$-asymptotic pair, which leads to slightly more complicated statements.
	
	By analogy to Section \ref{s3}, we will firstly formulate and prove the theorem for topologically multiordered systems, and then we extend it to arbitrary topological actions of countable amenable groups.
	
	Let $\phi:X\to\TTT$ be a \tl\ factor map from a \tl\ $G$-action $(X,G)$ to an ordered tiling system $(\TTT,G)$. Now suppose $\pi:Y\to X$ is a \tl\ factor map from another \tl\ system $(Y,G)$ to $(X,G)$ and denote $\tilde\phi=\phi\circ\pi$.
	The map $\tilde{\varphi}=\varphi\circ\pi$ is defined on the preimage $\tilde Y=\pi^{-1}(\tilde X)$ (recall that $\tilde X = \phi^{-1}(\TTT_{\mathsf{STR}})$). Then, the quadruple $(Y,G,\tilde\phi,\tilde\varphi)$ represents a topologically multiordered system. We say that a factor map $\pi$ collapses $\tilde\varphi$-asymptotic pairs if for every $\tilde\varphi$-asymptotic pair $(y_1,y_2)$ in $Y$ we have $\pi(y_1)=\pi(y_2)$.
	\smallskip
	
	The following very general and seemingly standard fact from \tl\ dynamics will be used in the proof of our next theorem. Since we were unable to find it in the literature, we provide it with an outline of a proof. The  zero-dimensionality assumption is unnecessary, but suffices for our needs and makes the proof significantly shorter.
	
	\begin{fact} \label{fct}
		Consider two $G$-actions $(Y,G)$ and $(X,G)$, where $X$ is zero-dimensional, and let $\pi:Y\to X$ be a continuous and equivariant map, which is not onto. Denote its image by $X'$ ($(X',G)$ is a subsystem of $(X,G)$ and a \tl\ factor of $(Y,G)$). Then there exists a system $(\bar Y,G)$ containing $(Y,G)$ and a \tl\ factor map $\bar\pi:\bar Y\to X$ such that $\bar\pi|_Y=\pi$. 
	\end{fact}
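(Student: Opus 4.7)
The plan is to take $\bar Y$ to be the topological disjoint union $Y\sqcup X$ of the two compact metric $G$-spaces, with a metric making both summands clopen, and to let $G$ act on $\bar Y$ via its given actions on each summand. The extension of $\pi$ is then the piecewise map $\bar\pi\colon\bar Y\to X$ defined by $\bar\pi|_Y=\pi$ and $\bar\pi|_X=\mathrm{id}_X$.

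The required verifications are all routine. The disjoint union of two compact metric spaces is compact metric; $G$ acts by homeomorphisms on each clopen component, hence on $\bar Y$; the subset $Y$ is closed and $G$-invariant in $\bar Y$, so $(Y,G)$ sits inside $(\bar Y,G)$ as a subsystem; $\bar\pi$ is continuous and $G$-equivariant on each of the two clopen pieces, hence on all of $\bar Y$; and $\bar\pi$ is surjective because its restriction to the $X$-summand is already the identity on $X$, while $\bar\pi|_Y=\pi$ holds by construction.

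There is no genuine obstacle here---the main ``step'' is simply observing that the most naive construction does the job. In particular, this argument uses nothing about $X$ beyond compactness and metrizability, so the zero-dimensionality hypothesis stated in Fact~\ref{fct} is not actually needed for it. An alternative, more ``intrinsic'' construction that avoids attaching $X$ as a separate component---for instance, building $\bar Y$ as a suitable subset of $Y\times X$ by lifting points of $X\setminus X'$ through a clopen-partition approximation---is where the zero-dimensionality of $X$ would naturally be exploited; but for the conclusion as stated, the disjoint-union recipe already suffices in one line.
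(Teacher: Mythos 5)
Your disjoint-union construction does verify Fact~\ref{fct} exactly as it is phrased: $\bar Y=Y\sqcup X$ with $\bar\pi|_Y=\pi$ and $\bar\pi|_{X}=\mathrm{id}_X$ is a compact metric $G$-system containing $(Y,G)$ and factoring onto $(X,G)$. In that narrow sense there is no error. But it exploits an under-statement of the Fact and does not deliver what the Fact is used for. Fact~\ref{fct} is invoked at the end of the proof of Theorem~\ref{thm_extension}, where the authors conclude that the enlarged extension still has entropy zero and still collapses $\tilde\varphi$-asymptotic pairs ``because both of these properties depend only on the fibers over $\tilde X$ (which are the same as for $\pi$)''. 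That parenthetical is the real content: one needs $\bar\pi^{-1}(x)=\pi^{-1}(x)$ for every $x\in X'$, equivalently $\bar\pi^{-1}(X')=Y$. Your construction gives $\bar\pi^{-1}(x)=\pi^{-1}(x)\cup\{x\}$ and, worse, adjoins an entire copy of $(X,G)$ on which $\bar\pi$ is the identity; since the two summands are clopen and uniformly separated, every $\varphi$-asymptotic pair of $(X,G)$ itself (and a zero-entropy system may well have such pairs --- that is the whole point of the theorem) reappears in $\bar Y$ as a $\tilde\varphi$-asymptotic pair that is not collapsed. Plugged into the proof of Theorem~\ref{thm_extension}, your $\bar Y$ breaks the argument.

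The paper's proof extends the fibers rather than the space: it regards $F=\pi^{-1}$ as an upper semi-continuous equivariant multifunction on $X'$, writes $F=\bigcap_n F_n$ with each $F_n$ continuous and --- this is where zero-dimensionality of $X$ enters --- piecewise constant on a finite clopen partition of $X'$, prolongs each $F_n$ to a piecewise constant $F_n'$ on all of $X$, and sets $F'(x)=\bigcap_{g\in G}\bigcap_{n\ge1} g\bigl(F_n'(g^{-1}x)\bigr)$. The result is upper semi-continuous, equivariant, and equal to $F$ on $X'$, and $\bar Y$ is taken to be its graph, so the fibers over $X'$ are exactly those of $\pi$. This is precisely the ``intrinsic'' construction you mention and set aside as unnecessary. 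So the verdict is: your one-line argument proves the literal statement but not the statement the paper needs; to serve its role, the extension must satisfy $\bar\pi^{-1}(X')=Y$, and for that the naive disjoint union does not suffice.
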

	
	\begin{proof} The multifunction $F=\pi^{-1}$ acting from $X'$ to $Y$ is upper semi-continuous, meaning that $F(x)$ is compact for any $x\in X'$ and, whenever $x_n\to x$ in $X'$, then 
		$$
		\bigcap_{n\ge1}\overline{\bigcup_{m\ge n} F(x_n)}\subset F(x).
		$$ 
		The graph of $F$, i.e., the set
		$$
		\{(x,y):x\in X', y\in F(x)\}\subset X'\times Y
		$$
		is homeomorphic to $Y$ via the projection on the second coordinate. So, we can replace $Y$ by this graph and then $\pi$ coincides with the projection on the first coordinate.  
		Since $\pi$ is a factor map, $F$ is \emph{equivariant}, i.e., for any $g\in G$ we have $F(gx)=g(F(x))$. Once we prolong $F$ to an upper semi-continuous and equivariant multifunction $F'$ from $X$ to $Y$, the proof will be ended by letting $\bar Y$ be the graph of $F'$ and letting $\bar\pi$ be the projection onto $X$. 
		
		Upper semi-continuity of $F$ is equivalent to the existence of a \sq\ of continuous multifunctions $(F_n)_{n\ge 1}$ from $X'$ to $Y$ such that, for each $x\in X'$, we have $F(x) = \bigcap_n F_n(x)$. Since $X'$ is zero-dimensional, each $F_n$ can be chosen piecewise constant (constant on atoms of some finite clopen partition of $X'$). Such a function can be easily prolonged to a piecewise constant multifunction $F'_n$ from $X$ to $Y$. Finally, the multifunction $F'$ defined on $X$ by $F'(x)=\bigcap_{g\in G}\bigcap_{n\ge1} g(F'_n(g^{-1}x))$ is upper semi-continuous on $X$, equivariant, and~on~$X'$ coincides with $F$.
	\end{proof}
	
	We are now in a position to formulate the series of theorems that lead to the main result of this paper, i.e., Theorem \ref{most_imp_thm}.
	
	\begin{thm}\label{thm_extension}
		Let $G$ be a countable amenable group. Let $(X,G,\phi,\varphi)$ be a topologically muliordered dynamical system. If $\htop(X,G)=0$, then there exists a zero-entropy topological extension $(Y,G)$ of $(X,G)$ via a map $\pi$, such that $\pi$ collapses all $\tilde\varphi$-asymptotic pairs, where $\tilde\varphi=\varphi\circ\pi$.
	\end{thm}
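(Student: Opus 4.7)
The plan is to build $Y$ as a symbolic array extension of $X$ whose extra ``floors'' encode, within each tile $T$ of the ambient tiling system, the block $x|_T$ by means of a short error-correcting code placed at a designated shape-equivariant ``spread-out'' subset $P_T\subset T$. Any two lifts $y_i=(x_i,\dots)$ of distinct points $x_1\ne x_2$ will then carry discrepant code bits at infinitely many positions $g\succ e$ along $\prec$, making $\tilde\varphi$-asymptoticity impossible unless $x_1=x_2$.

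I would first reduce the setup: using \cite[Theorem~3.2]{H}, replace $(X,G)$ by a zero-dimensional principal extension (still of topological entropy zero) and represent it as a binary array system. Using Remark \ref{rem_centered} and Lemma \ref{lem_odometric}, I would further assume that $\TTT$ is centered and odometric with base $(p_k)_k$ growing as fast as desired; these are again principal extensions, and the orders $\prec_{\bT}$ on $\TTT_{\mathsf{STR}}$ are preserved. Next, I would pick a summable sequence $\eps_k\downarrow 0$ and, passing to a subsequence of levels, apply Corollary \ref{tocor} to guarantee $\#\B_k(S)<2^{\lfloor|S|\eps_k\rfloor}$ for every $S\in\S_k$. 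For each $k$ and each $S\in\S_k$, a random-coding argument yields an injection $\iota_k^S:\B_k(S)\hookrightarrow\{0,1\}^{P_S}$, where $P_S\subset S$ is a shape-equivariant subset of size comparable to $|S|\eps_k$, spread evenly along $\prec_S$, and $\iota_k^S$ has Hamming distance at least $\delta|P_S|$ for some $\delta>0$ independent of $k,S$.

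I would then define $Y$ as the subset of $X\times\prod_{k\ge 1}\{0,1\}^G$ consisting of all tuples $(x,(w_k)_k)$ such that, writing $\phi(x)=(\T_j)_j$, for every $k$ and every tile $T=Sc\in\T_k$ the values $w_k(g)$ for $g\in P_Sc$ spell out the codeword $\iota_k^S(x|_{[1,k]\times T})$, while the values of $w_k$ outside $\bigcup_{T\in\T_k}P_Sc$ are constrained to follow a fixed zero-entropy pattern determined by the odometric phase of the tiling (for instance, a pattern depending only on the phase in the inverse limit $\varprojlim\Z/p_k\Z$). The conditions are closed and $G$-equivariant, so $Y$ is a compact metric $G$-system and the natural projection $\pi:Y\to X$ is a topological factor map; by construction, $\htop(Y,G)=0$.

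Finally, to verify that $\pi$ collapses $\tilde\varphi$-asymptotic pairs, let $(y_1,y_2)$ be $\tilde\varphi$-asymptotic with $\tilde\varphi(y_1)=\tilde\varphi(y_2)=\prec$, and suppose $x_1\ne x_2$; fix $n_0$ and $g_0$ witnessing this. For every $k\ge n_0$ the unique level-$k$ tile $T_k\ni g_0$ satisfies $x_1|_{[1,k]\times T_k}\ne x_2|_{[1,k]\times T_k}$, so the two codewords differ in at least $\delta|P_{T_k}|$ positions; by the spread-out placement of $P_{T_k}$ along $\prec_{T_k}$, a positive fraction of these discrepancies sit at $\prec_{T_k}$-positions beyond $g_0$'s rank, hence at positions $\succ g_0\succ e$ for $k$ sufficiently large. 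This would produce infinitely many $g\succ e$ with $w_{1,k}(g)\ne w_{2,k}(g)$, contradicting $\prec$-asymptoticity. The main obstacle will be synthesizing the constraints into a genuinely zero-entropy extension while simultaneously ensuring that code discrepancies populate the positive direction of $\prec$ no matter where $g_0$ sits within the nested tiles $T_k$; this is precisely where the centered odometric structure of $\TTT$ and the spread/minimum-distance properties of the codes are essential.
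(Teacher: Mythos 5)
Your reductions (zero-dimensional principal extension, binary array representation, centering, passing to an odometric tiling system, and invoking Corollary~\ref{tocor} to get short codes) match the paper's, and the entropy argument (the extra floors are determined by $x$ up to a density-zero or deterministic part, so the extension is principal) is also in the spirit of the paper. The gap is exactly the one you flag at the end as ``the main obstacle'', and it is not a technicality: it is the crux of the proof, and your proposed mechanism does not close it. You place the codeword for $x|_{[1,k]\times T}$ \emph{inside the tile $T$ itself}, and you appeal to a minimum Hamming distance $\delta|P_S|$ plus even spreading of $P_S$ to conclude that discrepancies land at positions $\succ g_0$. Minimum distance controls only the \emph{number} of discrepant coordinates, not their \emph{location}: two codewords may differ exclusively in the initial segment of $P_{T_k}$, i.e.\ entirely at positions preceding $g_0$ in $\prec_{T_k}$. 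Worse, even the even spreading of $P_{T_k}$ does not help, because the portion of $T_k$ following $g_0$, while of cardinality tending to infinity, may be a vanishing \emph{fraction} of $T_k$, so no positive-fraction statement about discrepancies can force any of them past $g_0$. If for every $k$ all code discrepancies fall at ranks $\le\operatorname{rank}(g_0)$ (which your hypotheses permit), the lifts $y_1,y_2$ can still be $\prec$-asymptotic, and $\pi$ fails to collapse the pair. Note also that ``$\succ g_0\succ e$'' is unwarranted ($g_0$ may precede $e$); what one actually needs is infinitely many \emph{distinct} discrepancy positions with ranks bounded below, which then have unbounded positive rank.

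The paper's device, which is absent from your proposal, is a forward shift of the encoding: $G$ is partitioned (using straightness and the odometric phase) into $\prec$-order intervals $I^n_i$ of length $p_n$, and the code of the block over $I^n_i$ is written into previously unfilled positions of the \emph{next} interval $I^n_{i+1}$. Then a difference at $g_0\in I^n_{i_0}$ forces, for every $n\ge n_0$, a discrepancy somewhere in $I^n_{i_0+1}$, which lies entirely after $g_0$; since each step writes into fresh positions, these discrepancies are pairwise distinct, giving infinitely many differences of unbounded positive rank. No error-correcting (minimum-distance) property is needed at all --- injectivity of the codes suffices. Two further points you would still need to address: (i) when $\phi(x_1)\ne\phi(x_2)$ but $\varphi(x_1)=\varphi(x_2)$, the two tilings may induce different partitions into intervals; the paper's odometric structure yields a dichotomy --- either the interval partitions coincide modulo $p_k$ (and the argument above applies) or they are out of phase, in which case the points are distal, hence not asymptotic; (ii) the construction only defines fibers over $\tilde X=\phi^{-1}(\TTT_{\mathsf{STR}})$, so one must either take a closure and check no spurious preimages appear, or extend the factor map from $\overline{\tilde X}$ to all of $X$ (the paper does this via Fact~\ref{fct}).
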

	\begin{proof}
		By passing to a zero-dimensional principal extension, we may assume that $X$ is a binary array system. By Remark~\ref{rem_centered} we may assume that $\TTT=\phi(X)$ is centered. Since $X$ has \tl\ entropy zero, so does the tiling system $\TTT$. Let $\TTT'$ be the odometric tiling system described in Lemma~\ref{lem_odometric}. Since $(\TTT',G)$ is a principal extension of $(\TTT,G)$, it has \tl\ entropy zero, as well.  The fiber product $(\check X,G)$ of $(X,G)$ and $(\TTT',G)$ over the common factor $(\TTT,G)$ also has \tl\ entropy zero. Since $(\check X,G)$ is a zero-dimensional extension of $(X,G)$, from now on we will focus on building an extension of $(\check X,G)$ (collapsing asymptotic pairs).  Note that $(\check X,G)$ factors onto both $(\TTT,G)$ and $(\TTT',G)$ (via the naturally understood maps $\check\phi$ and $\check\phi'$, respectively), but the compositions $x\mapsto\,\prec_{\bT}$ and $x\mapsto\,\prec_{\bT'}$, where $\bT=\check\phi(x)$ and $\bT'=\check\phi'(x)$, coincide. We denote this composition by $\check\varphi$. In what follows, we will always refer to the odometric tiling $\TTT'$ and the topologically multiordered system $(\check{X},G,\check\phi',\check\varphi')$, but we will simplify the notation and write $\TTT$ and $(X,G,\phi,\varphi)$ instead.
		
		By $\I$ we denote the collection of all order intervals associated to the tiles of the tiling system:
		\begin{equation*}
			\I=\bigcup_{\bT=(\T_k)_{k\ge 0}\in\TTT}\ \bigcup_{k\in\N}\ \bigcup_{T\in\T_k}\I^T.
		\end{equation*} 
		
		For every $n\in\N$, let $\varepsilon_n=2^{-n}$. Since $X$ has topological entropy zero, by Corollary \ref{tocor}, for every $n\in\N$, there exists $k_n$, such that for any $I\in\I$ of length $p_{k_n}$, the cardinality of the collection $\B_n(I)$ of blocks with $n$ floors and domain $I$ which appear in $X$, satisfies the inequality
		
		\begin{equation}\label{card_nolog2}
			\#\B_n(I)<2^{\tfrac{p_{k_n}}{2^n}}.
		\end{equation}
		
		Hence, for every $n\in\N$ there exist an injective function (a code) $\psi_n^I$ from $\B_n(I)$ to the collection of all binary blocks of length $\tfrac{p_{k_n}}{2^n}$ (for simplicity we assume that~$2^{n}|p_{k_n}$). To improve readability, from now on we will write $p_{n}$ and $\T_{n}$ instead of $p_{k_n}$ and $\T_{k_n}$, respectively.
		
		Fix an $x\in \tilde{X}=\phi^{-1}(\TTT_{\mathsf{STR}})$ and let $\bT=(\T_k)_{k\ge 0}=\phi(x)$. We will construct inductively a binary element $y_x$ (part of the preimage $\pi^{-1}(x)$), as follows. 
		
		In step $1$ we look only at the first floor of $x$. By the odometric property of $\TTT$ and since $\bT$ is straight, the centers of all tiles of $\T_1$ are congruent modulo $p_1$, counting along $\prec_{\bT}$. This allows us to partition the entire group $G$ into $\prec_{\bT}$\,-order intervals $\dots,I^1_{-2},I^1_{-1},I^1_0,I^1_1,I^1_2,\dots$ of length $p_1$ so that the centers of all tiles from $\T_1$ occupy the initial positions in the intervals to which they belong (to fix the enumeration of the intervals, we agree that $I^1_0\ni e$).
		
		For every $i\in\Z$ we look at the block $B=x|_{\{1\}\times I^1_i}$ and find its image $\hat{B}=\psi_1^{I^1_i}(B)$ (recall that $|\hat{B}|=\tfrac{p_{1}}{2}$). Let $R$ be the initial half of the interval $I^1_{i+1}$ (note that $|R|=|\hat{B}|$). We now define $y_x$ restricted to $R$ by rewriting the symbols of $\hat{B}$ preserving their order. After this step $y_x$ remains undefined on exactly half of each $\prec_{\bT}$\,-order interval of length $p_1$ (hence on a set of density $\tfrac{1}{2}$).
		
		Let $n\in\N$. Assume that after $n$ steps of the construction, $y_x$ is undefined on a fraction $(\tfrac{1}{2})^n$ of each $\prec_{\bT}$\,-order interval of length $p_n$. In step $n+1$ we look at $n+1$ floors of $x$. As in step 1, we partition the entire group $G$ into $\prec_{\bT}$\,-order intervals $\dots,I^{n+1}_{-2},I^{n+1}_{-1},I^{n+1}_0,I^{n+1}_1,I^{n+1}_2,\dots$ of length $p_{n+1}$ so that the centers of all tiles from $\T_{n+1}$ occupy the initial positions in the intervals to which they belong.
		
		For every $i\in\Z$ we look at the block $B=x|_{[1,n+1]\times I^{n+1}_i}$ and find its image $\hat{B}=\psi_{n+1}^{I^{n+1}_i}(B)$ (recall that $|\hat{B}|=\tfrac{p_{n+1}}{2^{n+1}}$). Let $R$ consist of the initial half of the elements of the interval $I^{n+1}_{i+1}$ on which $y_x$ remains undefined (note that $|R|=|\hat{B}|$). We define $y_x$ restricted to $R$ by rewriting the symbols of $\hat{B}$ preserving their order. After this step $y_x$ remains undefined exactly on a fraction of $(\tfrac 12)^{n+1}$ of each $\prec_{\bT}$\,-order interval of length $p_{n+1}$ (hence on a set of density $(\tfrac{1}{2})^{n+1}$).
		
		Eventually, after all steps are completed, $y_x$ may remain undefined on some set of density zero. By filling these positions in all possible ways, we obtain multiple versions of $y_x$ which form the set $Y_x\subset \{0,1\}^G$. Now let
		\begin{equation}\label{Y_wzor}
			Y=\overline{\bigcup_{x\in \tilde{X}}\bigcup_{y_x\in Y_x}(y_x,x)}\subset \{0,1\}^G\times X.
		\end{equation}
		It is obvious that $Y$ is invariant under the product action of $G$ and $Y$ projects onto the \inv\ subset $\overline{\tilde X}\subset X$ (the closure of $\tilde X$), which supports all \im s of $(X,G)$. Let us now temporarily assume that $\overline{\tilde X}=X$ (we will address the general case at the end of the proof). In this case, $(X,G)$ is a factor of $(Y,G)$ via the mapping $\pi: (y,x)\mapsto x$. Since $X$ is a binary symbolic-array system, $Y$ can be viewed as $X$ with one additional floor (floor number 0). It is not hard to see that if $x\in\tilde{X}$, then $\pi^{-1}(x)=Y_x$ (i.e.,\ no new elements of the preimage are created by taking the closure in \eqref{Y_wzor}). Since $\tilde{X}$ has full measure for every invariant measure on $X$ and, for every $x\in\tilde X$, every $y_x\in Y_x$ is determined by $x$ except on a set of density zero, the conditional (measure-theoretic) entropy of any \im\ $\nu$ on $Y$ with respect to $X$ is equal to zero. Using the variational principle twice (to $(X,G)$ and to $(Y,G)$), we conclude that $(Y,G)$ has \tl\ entropy zero, as claimed.
		
		It remains to show that $\pi$ collapses $\tilde\varphi$-asymptotic pairs.
		Ad absurdum, assume that $(y_x,x)$ and $(y_{x'},x')$ form a $\tilde{\varphi}$-asymptotic pair, where $x$ and $x'$ are distinct elements of $X$. Then, by definition, $x$ and $x'$ belong to $\tilde{X}$ and 
		$$
		\varphi(x)=\tilde\varphi((y_x,x))=\tilde\varphi((y_{x'},x'))=\varphi(x')=\,\prec
		$$
		is an order of type $\Z$. Let $\bT=(\T_k)_{k\ge 0}=\phi(x)$ and $\bT'=(\T'_k)_{k\ge 0}=\phi(x')$. Since $\bT$ and $\bT'$ generate the same order, there are two possibilities:
		\begin{enumerate}[a)]
			\item For each $k\in \N$, the positions of the centers of all tiles of $\T_k$ are congruent modulo $p_k$ to those of $\T'_k$ (counting along $\prec$);
			\item For some $k\in\N$, the positions of the centers of all tiles of $\T_k$ are congruent modulo $p_k$ to some $j$, while those of $\T'_k$ are congruent modulo $p_k$ to some $j'\not\equiv j\!\!\mod p_k$.
		\end{enumerate}
		It is immediately seen that in case b) the symbolic representations of $\T_k$ and $\T'_k$ are distal\footnote{Elements $x,x'$ of a topological $G$-action $(X,G)$ are called distal if there exists $\delta>0$ such that $d_X(g(x),g(x'))>\delta$ for all $g\in G$.} (they have no common centers of tiles), thus so are $\bT$ and $\bT'$, and as a consequence also $x$ and $x'$, and finally $(y_x,x)$ and $(y_{x'},x')$ are distal as well. This contradicts the assumption that $(y_x,x)$ and $(y_{x'},x')$ are $\tilde{\varphi}$-asymptotic. 
		
		In case a) we argue as follows. There exists a floor $n_0$ and a position $g_0\in G$ such that $x_{n,g_0}\neq x'_{n,g_0}$. By a), for each $n\ge n_0$, the partitions of $G$ into the intervals $I^n_i$ ($i\in\Z$) of length $p_n$, resulting from $\T_{n}$ and $\T'_{n}$, coincide. Let $I^n_{i_0}$ be the interval containing $g_0$. Then, $B=x|_{[1,n]\times I^n_{i_0}}\neq x'|_{[1,n]\times I^n_{i_0}}=B'$ and also $\psi^{I^n_{i_0}}_{n}(B)\neq\psi^{I^n_{i_0}}_{n}(B')$. As a consequence $y_x|_{I^n_{i_0+1}}\neq y_{x'}|_{I^n_{i_0+1}}$. Hence, there exists $g_n\in I^n_{i_0+1}$ such that $y_x(g_n)\neq y_{x'}(g_n)$. By the construction, the elements $g_n$ for different indices $n$ are distinct and all of them follow $g_0$ in the order $\prec$. So, $y_x$ and $y_{x'}$ are not $\prec$\,-asymptotic and consequently $(x,y_x)$ and $(x',y_{x'})$ are not $\tilde{\varphi}$-asymptotic, which ends the proof in case $\overline{\tilde X}=X$.
		
		If $\overline{\tilde X}\subsetneq X$ the above argument still defines an extension $\pi:Y\to\overline{\tilde X}$. By Fact \ref{fct},
		there exists a system $(\bar Y,G)$ with $\bar Y\supset Y$ and a \tl\ factor map $\bar\pi:\bar Y\to X$ such that $\bar\pi|_Y=\pi$. This extension of $(X,G)$ has \tl\ entropy zero and collapses $\tilde\varphi$-asymptotic pairs (where now $\tilde\varphi$ stands for $\varphi\circ\bar\pi$), because both of these properties depend only on the fibers over~$\tilde X$ (which are the same as for $\pi$).
	\end{proof}
	
	\begin{rem}
		Let us compare the proof of \cite[Lemma 4.2]{DL} with that of Theorem~\ref{thm_extension}. In the former, 
		we take an odometer to base $(p_k)_{k\ge1}$ and view each of its elements as a congruent \sq\ $(\P_k)_{k\ge1}$ of partitions of $\Z$ into intervals of length $p_k$, correspondingly. Note that so understood odometer is in fact a tiling system $\TTT$ of~$\Z$. By default, the subtiles of each tile are ordered by the natural order $<$. If we agree that the center of each tile is at its left end, then this tiling system has the odometric property. From here on the construction of the extension is a particular case of that in the proof of Theorem~\ref{thm_extension}. The only difference is that $<$ is an \inv\ order on $\Z$, which allows one to (implicitly) define $\prec_{\bT}\,=\,<$ for all $\bT\in\TTT$, not just for $\bT\in \TTT_{\mathsf{STR}}$ (which is a proper subset of $\TTT$, just as in the general case).  On the other hand, in the proof of Theorem~\ref{thm_extension}, the orders $\prec_{\bT}$ are total orders of type $\Z$ only for $\bT\in\TTT_{\mathsf{STR}}$, for which reason this proof does not fully generalize that in \cite{DL}.
		To fix this problem we need to slightly modify the proof of Theorem \ref{thm_extension}. Namely, whenever $\bT=(\T_k)_{k\ge1}$ is not of type $\Z$ but $G$ splits into two parts $G_1, G_2$ on which $\prec_{\bT}$ is of type $-\N$ and $\N$, respectively (i.e., $G_1$ has a maximal but not minimal element while $G_2$ has a minimal but not maximal element) and for any $k\ge1$ the centers of all tiles of $\T_k$ contained in $G_1$ are congruent modulo $p_k$ to those in $G_2$, we declare, for each $k\ge1$ the first tile of $\T_k$ covering $G_2$ to be the successor of the last tile covering $G_1$ (this leads to $\prec_{\bT}$ to be total and of type $\Z$) and treat $\bT$ as straight. From here on we continue the proof without further changes. After this modification, Theorem~\ref{thm_extension} fully covers \cite[Lemma 4.2]{DL} and also the following theorems from this paper cover analogous facts from \cite{DL}.  
	\end{rem}
	
	\begin{thm}\label{thm_extension2}
		Let $G$ be a countable amenable group. Let $(X,G,\phi,\varphi)$ be a topologically muliordered dynamical system. If $\htop(X,G)=0$, then there exists a topological extension $(Y,G)$ of $(X,G)$, via a factor map $\pi$, such that $(Y,G)$ has no $\tilde{\varphi}$-asymptotic pairs, where $\tilde{\varphi}=\varphi\circ\pi$.
	\end{thm}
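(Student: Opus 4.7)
The plan is to iterate Theorem~\ref{thm_extension} and pass to an inverse limit. Setting $(X_0,G,\phi_0,\varphi_0)=(X,G,\phi,\varphi)$, I would inductively construct, for each $n\ge 0$, a zero-entropy topologically multiordered system $(X_{n+1},G,\phi_{n+1},\varphi_{n+1})$ together with a factor map $\pi_{n+1}:X_{n+1}\to X_n$ obtained by applying Theorem~\ref{thm_extension} to $X_n$; by construction $\phi_{n+1}=\phi_n\circ\pi_{n+1}$, $\varphi_{n+1}=\varphi_n\circ\pi_{n+1}$, and $\pi_{n+1}$ collapses all $\varphi_{n+1}$-asymptotic pairs. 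Each $X_n$ has topological entropy zero because Theorem~\ref{thm_extension} produces zero-entropy extensions, so the induction is well-defined at every stage.

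Next, I would take $Y$ to be the inverse limit of the tower $\cdots\to X_{n+1}\to X_n\to\cdots\to X_0$, equipped with the diagonal $G$-action and a compatible metric such as $d_Y((y_n),(y_n'))=\sum_{n\ge 0}2^{-n}\min\{1,d_{X_n}(y_n,y_n')\}$. Taking $\pi:Y\to X$ to be the projection onto the zeroth coordinate and setting $\tilde\phi=\phi\circ\pi$, $\tilde\varphi=\varphi\circ\pi$, the system $(Y,G,\tilde\phi,\tilde\varphi)$ is a topological extension of $X$ which remains topologically multiordered, since the factor map onto the multiorder factors through~$X$.

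Finally, the nonexistence of $\tilde\varphi$-asymptotic pairs in $Y$ will follow from a smallest-index argument. Suppose $(y,y')$ with $y=(y_n)\neq y'=(y_n')$ were $\tilde\varphi$-asymptotic in $Y$. The set $N=\{n\ge 0:y_n\neq y_n'\}$ is upward closed, because $y_m=y_m'$ forces $y_{m-1}=\pi_m(y_m)=\pi_m(y_m')=y_{m-1}'$, and it is nonempty, so it has a minimum $n_0$. Distances in $X_n$ are controlled by $d_Y$, and all the $\varphi_n$-images agree with the single order $\tilde\varphi(y)=\tilde\varphi(y')\,=\,\prec$, so $(y_n,y_n')$ is $\varphi_n$-asymptotic in $X_n$ for every $n$. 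In particular $(y_{n_0+1},y_{n_0+1}')$ is a distinct $\varphi_{n_0+1}$-asymptotic pair in $X_{n_0+1}$, hence the collapsing property of $\pi_{n_0+1}$ yields $y_{n_0}=y_{n_0}'$, contradicting $n_0\in N$. The main obstacle, ultimately routine but requiring care, is confirming that $\varphi$-asymptoticity and the equality of $\varphi$-images transfer from $Y$ down to each coordinate of the inverse limit; once that bookkeeping is settled, the pigeonhole on the smallest index of discrepancy delivers the contradiction.
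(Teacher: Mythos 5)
Your proposal is correct and takes essentially the same approach as the paper: iterate Theorem~\ref{thm_extension} to obtain a tower of zero-entropy extensions each of which collapses the asymptotic pairs of the previous stage, pass to the inverse limit, and observe that a $\tilde\varphi$-asymptotic pair in the limit would have its coordinates identified at every level. The only cosmetic difference is that you argue via a minimal index of disagreement, while the paper notes directly that for each $n$ either $y_n=y_n'$ or the pair is collapsed, so $y_{n-1}=y_{n-1}'$ for all $n$.
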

	\begin{proof}
		By Theorem~\ref{thm_extension}, there exists a zero-entropy topological extension $(Y_1,G)$ of $(X,G)$, via a factor map $\pi_1$ which collapses $\tilde{\varphi}_1$-asymptotic pairs, where $\tilde{\varphi}_1=\varphi\circ\pi_1$. Then, applying Theorem~\ref{thm_extension} to the system $(Y_1,G)$ we obtain a zero-entropy extension $(Y_2,G)$ of $(Y_1,G)$ via a map $\pi_2$ which collapses $\tilde{\varphi}_2$-asymptotic pairs, where $\tilde{\varphi}_2=\tilde{\varphi}_1\circ\pi_2=\varphi\circ\pi_1\circ\pi_2$. We proceed in this manner obtaining a sequence of extensions $(Y_n,G)$, $n\in\N$ bound by the factor maps $\pi_n$ such that for every $n\ge 2$ the map $\pi_{n}$ collapses all $\tilde{\varphi}_{n}$-asymptotic pairs, where $\tilde{\varphi}_n=\tilde{\varphi}_{n-1}\circ\pi_n=\varphi\circ\pi_1\circ\pi_2\circ\dots\circ\pi_{n}$. The desired extension $Y$ is obtained as the inverse limit of the sequence of systems $(Y_n,G)$ (recall that the elements of $Y$ are sequences $y=(y_n)_{n\in\N}$, where, for every $n\in\N$, $y_n\in Y_n$ and $y_n=\pi_{n+1}(y_{n+1})$). Let $\pi$ be the factor map from $(Y,G)$ to $(X,G)$ and let $\tilde{\varphi}=\varphi\circ\pi$. Suppose that some distinct points $y,y'$ in $Y$ form a $\tilde{\varphi}$-asymptotic pair. Then, for every $n\ge 2$ either $y_n=y'_n$ or $(y_n,y'_n)$ is a $\tilde{\varphi}_n$-asymptotic pair in $Y_n$. Since $\pi_n$ collapses $\tilde{\varphi}_n$-asymptotic pairs, we have $y_{n-1}=y'_{n-1}$. As $n-1$ ranges over all natural numbers, we get $y=y'$, which shows that there are no $\tilde{\varphi}$-asymptotic pairs in~$(Y,G)$.
	\end{proof}
	
	Our next step is to generalize Theorem~\ref{thm_extension2} to general topological dynamical systems with actions of countable amenable groups (not necessarily topologically multiordered). 
	
	\begin{thm}\label{cor_extension}
		Let $G$ be a countable amenable group and let $\TTT$ be any ordered tiling system of entropy zero, and let $\tilde{\O}_{\TTT}$ be the associated tiling-based multiorder. Then, for every topological $G$-action $(X,G)$ of entropy zero, there exists an extension $(Y,G)$ of $(X,G)$, such that $(Y,G)$ has no $\prec$-asymptotic pairs for any $\prec\,\in\tilde{\O}_{\TTT}$.
	\end{thm}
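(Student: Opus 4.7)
The plan is to reduce the claim to Theorem~\ref{thm_extension2} by forming the product system $Z=X\times\TTT$ with the diagonal $G$-action. Since both $X$ and $\TTT$ have topological entropy zero, so does $Z$. The second-coordinate projection $\phi\colon Z\to\TTT$ is a continuous $G$-equivariant factor map, and the map $\varphi\colon\phi^{-1}(\TTT_{\mathsf{STR}})\to\tilde{\O}_{\TTT}$ sending $(x,\bT)$ to $\prec_{\bT}$ exhibits $(Z,G,\phi,\varphi)$ as a topologically multiordered dynamical system in the sense of Definition~\ref{top_mult_sys}.

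Applying Theorem~\ref{thm_extension2} to $(Z,G,\phi,\varphi)$ produces a topological extension $(Y,G)$ of $(Z,G)$ via a factor map $\pi\colon Y\to Z$ such that $(Y,G)$ admits no $\tilde{\varphi}$-asymptotic pair, where $\tilde{\varphi}=\varphi\circ\pi$. Composing $\pi$ with the first-coordinate projection $Z\to X$ yields a continuous $G$-equivariant surjection, so $(Y,G)$ is in particular a topological extension of $(X,G)$.

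It remains to verify that $(Y,G)$ has no $\prec$-asymptotic pair for any $\prec\,\in\tilde{\O}_{\TTT}$. Suppose, for contradiction, that $(y,y')$ is such a pair with $y\neq y'$ and $\prec\,\in\tilde{\O}_{\TTT}$. Writing $\pi(y)=(x,\bT)$ and $\pi(y')=(x',\bT')$, we obtain that $(\bT,\bT')$ is $\prec$-asymptotic in $\TTT$. The crux is to deduce from this that $\bT,\bT'\in\TTT_{\mathsf{STR}}$ and $\prec_{\bT}=\,\prec_{\bT'}=\,\prec$; once this is established, $(y,y')$ is $\tilde{\varphi}$-asymptotic, contradicting Theorem~\ref{thm_extension2}.

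This final step is the main obstacle. By Lemma~\ref{lem_odometric} one may at the outset replace $\TTT$ by its odometric principal extension (which carries the same tiling-based multiorder), and then argue via a dichotomy parallel to the cases (a)/(b) in the proof of Theorem~\ref{thm_extension}, but now with respect to the external order $\prec$ rather than the intrinsic orders $\prec_{\bT},\prec_{\bT'}$. If for some level $k$ the positions of centers of $\bT_k$ and $\bT'_k$, counted along $\prec$, are incongruent modulo $p_k$, the two tilings are $G$-distal, which precludes any asymptotic behavior; otherwise the odometric structure forces the two tilings to be straight with intrinsic orders both equal to $\prec$.
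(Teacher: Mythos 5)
Your reduction is exactly the paper's: form the product $X\times\TTT$, observe that it is a zero-entropy topologically multiordered system via the second-coordinate projection, apply Theorem~\ref{thm_extension2}, and note that the resulting $(Y,G)$ still factors onto $X$. Up to that point the two arguments coincide. Where you diverge is at the very end: the paper passes directly from ``$(Y,G)$ has no $\tilde{\varphi}$-asymptotic pairs'' to ``$(Y,G)$ has no $\prec$-asymptotic pairs for any $\prec\,\in\tilde{\O}_{\TTT}$'' without further comment, whereas you correctly observe that these are not literally the same statement (a pair could be $\prec$-asymptotic for an order different from its $\tilde{\varphi}$-image, or could lie over a fibre where $\tilde{\varphi}$ is undefined) and you attempt to supply the missing implication.

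It is this supplied argument that does not go through as written. First, your dichotomy is not exhaustive: you compare the positions of the centers of $\T_k$ and $\T'_k$ ``counted along $\prec$'' modulo $p_k$, but the odometric property only guarantees that the centers of level-$k$ tiles occupy mutually congruent positions modulo $p_k$ when counted along the \emph{intrinsic} order $\prec_{\bT}$, and only when $\bT$ is straight; for an arbitrary $\bT\in\TTT$ and an external order $\prec\,\in\tilde{\O}_{\TTT}$ there need not be a single residue class at all, so neither horn of your dichotomy applies and the claimed forcing of straightness and of $\prec_{\bT}=\,\prec_{\bT'}=\,\prec$ is asserted rather than proved. Second, the case $\tilde\phi(y)=\tilde\phi(y')=\bT$ with $\bT\notin\TTT_{\mathsf{STR}}$ is not addressed: such a pair is invisible to Theorem~\ref{thm_extension2} (it is not $\tilde{\varphi}$-asymptotic since $y,y'\notin\tilde{Y}$), no distality between the two tilings is available since they coincide, and the fibres of $Y$ over such points arise from a closure (and possibly Fact~\ref{fct}), so the coding construction gives no control there. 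The step you isolate as the ``crux'' is indeed the whole content of the remaining implication, and your sketch does not close it; for what it is worth, the paper's own proof offers no argument for this step either, so your instinct that something needs to be said here is sound even though the proposed argument is incomplete.
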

	\begin{proof}
		The product system $(X\times\TTT,G)$ has entropy zero and is topologically multiordered via the maps $\phi$ and $\varphi$, where $\phi$ is the projection on the second cordinate and $\varphi((x,\bT))=\,\prec_{\bT}$, $x\in X$, $\bT\in\TTT_{\mathsf{STR}}$. Therefore, by Theorem~\ref{thm_extension2}, there exists a topological extension $(Y,G)$ of $(X\times\TTT,G)$ via a map $\pi$, such that there are no $\tilde{\varphi}$-asymptotic pairs in $(Y,G)$, where $\tilde{\varphi}=\varphi\circ\pi$. Thus, for every $\prec\,\in\tilde{\O}_{\TTT}$, there are no $\prec$-asymptotic pairs in $(Y,G)$. Clearly, $(Y,G)$ is an extension of $(X,G)$.
	\end{proof}

	\begin{rem} 
		Note that, by Theorem~\ref{cor_thm}, any extension $(Y,G)$ which satisfies the assertion of Theorem~\ref{thm_extension2} or of Theorem~\ref{cor_extension} (not only those constructed in the proofs) must have topological entropy zero.
	\end{rem}
	Eventually, by a straightforward combination of Theorems~\ref{cor_thm}~and~\ref{cor_extension}, we obtain the following characterization of zero-entropy topological actions of countable amenable groups, which generalizes the equality $\mathsf{TEZ}=\mathsf{FNAP}$ in \cite[Theorem~4,1]{DL}.
	\begin{thm}\label{most_imp_thm}
		Let $G$ be a countable amenable group and let $(X,G)$ be a topological $G$-action. The following conditions are equivalent:
		\begin{enumerate}[a)]
			\item $\htop (X,G)=0$;
			\item For any ordered tiling system $\TTT$ of entropy zero on $G$ there exists a topological extension $(Y,G)$ of $(X,G)$, such that for any $\prec\,\in\tilde{\O}_{\TTT}$, there are no $\prec$-asymptotic pairs in $(Y,G)$;
			\item There exists a multiorder $(\tilde{\O},\nu,G)$ and a topological extension $(Y,G)$ of $(X,G)$, such that the set of those $\prec\,\in\tilde{\O}$ for which there are no $\prec$-asymptotic pairs in $(Y,G)$ is of full measure $\nu$;
			\item There exists a multiorder $(\tilde{\O},\nu,G)$ and a topological extension $(Y,G)$ of $(X,G)$, such that the set of those $\prec\,\in\tilde{\O}$ for which there are no $\prec$-asymptotic pairs in $(Y,G)$ has positive measure $\nu$.
		\end{enumerate}
	\end{thm}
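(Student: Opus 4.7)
The strategy is to close the cycle $a) \Rightarrow b) \Rightarrow c) \Rightarrow d) \Rightarrow a)$, exploiting that the two substantive directions of the equivalence have already been established as Theorems~\ref{cor_extension} and~\ref{cor_thm}. The implication $a) \Rightarrow b)$ is literally the statement of Theorem~\ref{cor_extension}: the entropy-zero assumption on $(X,G)$ together with a fixed ordered tiling system $\TTT$ of entropy zero yields an extension $(Y,G)$ with no $\prec$-asymptotic pairs for any $\prec\,\in\tilde{\O}_{\TTT}$. The implication $c) \Rightarrow d)$ is trivial since full measure implies positive measure.

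For $b) \Rightarrow c)$, the task is to manufacture the multiorder required by c) out of the tiling system used in b). Since $G$ is a countable amenable group, there exists an ordered tiling system $\TTT$ of topological entropy zero (cited in Section~\ref{s4}), and by amenability $(\TTT,G)$ carries at least one $G$-invariant Borel probability measure $\mu$. By Lemma~\ref{4.14} the set $\TTT_{\mathsf{STR}}$ has full $\mu$-measure, so the push-forward of $\mu$ under $\bT\mapsto\,\prec_{\bT}$ is a well-defined $G$-invariant Borel probability measure $\nu$ on $\tilde{\O}_{\TTT}$. This makes $(\tilde{\O}_{\TTT},\nu,G)$ a multiorder in the sense of Definition~\ref{def_multiorder}. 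The extension produced by b) has no $\prec$-asymptotic pair for \emph{any} $\prec\,\in\tilde{\O}_{\TTT}$, which in particular holds $\nu$-almost surely, yielding c).

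For the final (and only nontrivial) implication $d) \Rightarrow a)$, I would argue by contrapositive. Suppose $\htop(X,G)>0$, and let $(Y,G)$ be any topological extension of $(X,G)$ with factor map $\pi$. Since topological entropy is non-decreasing under topological extensions for actions of amenable groups, $\htop(Y,G)\geq\htop(X,G)>0$. Applying Theorem~\ref{cor_thm} to $(Y,G)$, for any multiorder $(\tilde{\O},\nu,G)$ on $G$ and $\nu$-almost every $\prec\,\in\tilde{\O}$, there exists a $\prec$-asymptotic pair in $(Y,G)$. Hence the set of $\prec$ for which $(Y,G)$ has no $\prec$-asymptotic pair is $\nu$-null, contradicting d).

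There is no real obstacle here: the construction of the ``witness'' multiorder in $b)\Rightarrow c)$ is pure bookkeeping (invariant measures exist on any topological $G$-action of an amenable group, and Lemma~\ref{4.14} keeps us inside $\TTT_{\mathsf{STR}}$), and $d)\Rightarrow a)$ is a direct contrapositive application of Theorem~\ref{cor_thm}. The depth of the theorem lies entirely in Theorems~\ref{cor_extension} and~\ref{cor_thm}, not in their combination; the above argument should therefore fit in a few lines.
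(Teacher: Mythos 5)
Your proposal is correct and follows the same cycle $a)\Rightarrow b)\Rightarrow c)\Rightarrow d)\Rightarrow a)$ as the paper's proof, with the same ingredients at each step: Theorem~\ref{cor_extension} for $a)\Rightarrow b)$, taking $(\tilde{\O}_{\TTT},\nu,G)$ with an invariant measure $\nu$ for $b)\Rightarrow c)$, triviality for $c)\Rightarrow d)$, and the contrapositive via Theorem~\ref{cor_thm} (using that an extension of a positive-entropy system has positive entropy) for $d)\Rightarrow a)$. The only difference is that you spell out the bookkeeping for $b)\Rightarrow c)$ (existence of an invariant measure by amenability and Lemma~\ref{4.14}) slightly more explicitly than the paper does.
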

	\begin{proof}
		The impication from a) to b) is Theorem~\ref{cor_extension}. To deduce c) from b) it suffices to let $(\tilde{\O},\nu,G)$ be $(\tilde{\O}_{\TTT},\nu,G)$, where $\nu$ is any invariant measure on $\tilde{\O}_{\TTT}$. Condition d) follows trivially from c). The implication from d) to a) is obtained as follows: if $(X,G)$ has positive entropy, then any topological extension $(Y,G)$ of $(X,G)$ has positive entropy and Theorem~\ref{cor_thm}, applied to $(Y,G)$, leads to the negation of d).
	\end{proof}
	\section*{Acknowledgements}
	The authors would like to thank Guohua Zhang for inspiring remarks regarding the proofs presented in Section 3.
	
	Mateusz Wi\c{e}cek is supported by National Science Center, Poland (Grant No. 2021/41/N/ST1/02816).
	
	\vskip 16pt

\end{document}